\begin{document}

\def\cF{\mathcal{F}}
\def\cL{\mathcal{L}}
\def\cO{\mathcal{O}}
\def\cP{\mathcal{P}}
\def\cS{\mathcal{S}}
\def\cX{\mathcal{X}}
\def\cY{\mathcal{Y}}
\def\cZ{\mathcal{Z}}

\definecolor{myBlue}{rgb}{0,.4,.6}

\newcommand{\myStep}[1]{\noindent {\bf Step #1}.}

\newcommand{\removableFootnote}[1]{}

\newtheorem{theorem}{Theorem}[section]
\newtheorem{lemma}[theorem]{Lemma}
\newtheorem{proposition}[theorem]{Proposition}

\theoremstyle{definition}
\newtheorem{definition}{Definition}[section]

\title{
Subsumed homoclinic connections and infinitely many coexisting attractors in piecewise-linear maps.
}
\author{
D.J.W.~Simpson and C.P.~Tuffley\\
Institute of Fundamental Sciences\\
Massey University\\
Palmerston North\\
New Zealand}
\maketitle

\begin{abstract}

We establish an equivalence between infinitely many asymptotically stable periodic solutions and subsumed homoclinic connections for $N$-dimensional piecewise-linear continuous maps. These features arise as a codimension-three phenomenon. The periodic solutions are single-round: they each involve one excursion away from a central saddle-type periodic solution. The homoclinic connection is subsumed in the sense that one branch of the unstable manifold of the saddle solution is contained entirely within its stable manifold. The results are proved by using exact expressions for the periodic solutions and components of the stable and unstable manifolds which are available because the maps are piecewise-linear. We also describe a practical approach for finding this phenomenon in the parameter space of a map and illustrate the results with the three-dimensional border-collision normal form.

\end{abstract}


\section{Introduction}
\label{sec:intro}
\setcounter{equation}{0}

One of the most important global features of a discrete-time dynamical system (or map) is that of a homoclinic connection.
Homoclinic connections are given by intersections between the stable and unstable manifolds of an invariant set and are central to our understanding of chaos.
If the intersections are transverse, then they are not broken by small perturbations to the map.
For this reason, transverse homoclinic connections are robust structures.
Their existence implies the presence of a Smale horseshoe, infinitely many unstable periodic orbits, and chaotic dynamics \cite{PaTa93}.

As the parameters of a smooth map are varied, transverse homoclinic connections can be created (or destroyed)
in homoclinic tangencies.
Here, stable and unstable manifolds intersect tangentially.
A homoclinic tangency is a codimension-one phenomenon,
thus to understand the nature of the dynamics associated with the tangency as the system is perturbed,
it suffices to vary a single parameter.

In the simplest setting: that the invariant set is a saddle fixed point of a two-dimensional map,
these dynamics were first studied rigorously in \cite{GaSi72,GaSi73}.
The simplest nearby bifurcations are those of single-round periodic solutions
(involving one excursion away from the fixed point).
These bifurcations are described by a one-dimensional quadratic map through the process of renormalisation.
The single-round periodic solutions can be asymptotically stable, 
but sufficiently close to a non-degenerate homoclinic tangency,
asymptotically stable single-round periodic solutions do not coexist.
Other invariant sets exist near homoclinic tangencies, such as multi-round periodic solutions,
and indeed the complete bifurcation structure is fractal \cite{GoBa02}.
Attractors may coexist, and infinitely many attractors coexist on parameter sets known as Newhouse regions \cite{Ne74,Ro83}.

In more than two dimensions, only the slowest stable and unstable directions about the saddle fixed point are important to the dynamics.
This is because almost all forward orbits that approach the fixed point do so tangent to the slowest stable direction,
and almost all backward orbits that approach the fixed point asymptotically do so tangent to the slowest unstable direction.
Thus if the eigenvalues associated with these directions are real and of multiplicity one,
then as in the two-dimensional case the renormalised dynamics are described by a one-dimensional quadratic map.
Cases involving complex eigenvalues lead to different maps,
and it has been shown that four maps suffice to describe all possible scenarios \cite{GoSh96,GoSh97,GoTu05}.

Various types of degenerate homoclinic tangencies have been studied.
Cubic and other higher-order tangencies yield a similar basic bifurcation structure \cite{Da91,GoTu07}.
A degeneracy in the global reinjection mechanism allows for the existence
of infinitely many elliptic single-round periodic solutions in area-preserving maps \cite{GoSh05,GoGo09}.
For a codimension-two scenario at which the branches of the stable and unstable manifolds that intersect are coincident,
an unfolding reveals single-round periodic solutions existing between pairs of saddle-node bifurcations \cite{HiLa95}.
A codimension-three homoclinic tangency corresponding to a Shilnikov-Hopf bifurcation is unfolded in \cite{ChRo99}.
Also, for piecewise-smooth maps, homoclinic connections can be created in homoclinic corners \cite{Si16b}.

This paper concerns piecewise-linear maps that are continuous but non-differentiable on a
codimension-one manifold termed the switching manifold.
The global dynamics of such maps describe the local dynamics of border-collision bifurcations \cite{DiBu08,Si16}.
Such dynamics can involve multiple attractors \cite{KaMa98,DuNu99,ZhMo08d}.
This paper builds on a result of \cite{Si14} for such maps in two dimensions.
There it was shown that, under certain conditions,
if the map has infinitely many stable single-round periodic solutions,
then it must have a particular coincident homoclinic connection that is codimension-three.
As the map is perturbed from this codimension-three scenario,
the number of coexisting attractors scales with $\frac{\ln(\varepsilon)}{\ln(\lambda)}$,
where $\varepsilon$ is the size of the parameter change and $\lambda$ is the associated stable eigenvalue \cite{Si14b}.
Examples with the same underlying mechanism had been given earlier
in an area-preserving scenario \cite{DoLa08}
and for a three-component piecewise-smooth map \cite{GaTr83}.

In this paper the result of \cite{Si14} is extended to $N$-dimensions (Theorem \ref{th:homoclinic}).
Here the unstable manifold is again one-dimensional, but the stable manifold is now $(N-1)$-dimensional.
Branches of the stable and unstable manifolds therefore cannot be coincident.
Instead we find that one branch of the unstable manifold is
contained within the stable manifold --- we say it is {\em subsumed}.
We also provide a converse result (Theorem \ref{th:infinity}) showing that, under certain conditions,
the presence of a subsumed homoclinic connection,
as characterised using an orbit with two points on the switching manifold,
implies the existence of infinitely many asymptotically stable single-round periodic solutions.

The remainder of this paper is organised as follows.
In \S\ref{sec:symbolic} we introduce a general piecewise-linear continuous map $f$
and explain how orbits and periodic solutions of $f$ are encoded symbolically.
We also characterise the stability and admissibility of periodic solutions
and provide a simple result regarding line segments.

In \S\ref{sec:symHC} we provide rationale for the assumptions
placed upon the symbolic itineraries in Theorems \ref{th:infinity} and \ref{th:homoclinic}.
This is achieved by investigating the consequences that an orbit $\{ y_i \}$,
homoclinic to an $\cX$-cycle (periodic solution with itinerary $\cX$) is of a simple type.
Then in \S\ref{sec:subsumed} we show that if the dimension of the unstable manifold of the $\cX$-cycle is one
and $\{ y_i \}$ has two particular points on the switching manifold of $f$,
then, under certain conditions, the $\cX$-cycle has a subsumed homoclinic connection, Proposition \ref{pr:subsumed}.
In \S\ref{sec:thms} we then state and discuss Theorems \ref{th:infinity} and \ref{th:homoclinic}.

In \S\ref{sec:examples} we explain how the codimension-three scenario can be identified numerically
in a map with at least three parameters and provide three specific examples using
the three-dimensional border-collision normal form.
Theorems \ref{th:infinity} and \ref{th:homoclinic} are then proved in \S\ref{sec:proofs}
using a slew of algebraic, analytical, and geometric arguments.
The over-riding strategy is to work in a coordinate system centred about one point
of the $\cX$-cycle and with axes that at least partially align with the stable and unstable subspaces of the $\cX$-cycle.
Finally \S\ref{sec:conc} provides a summary and outlook for future studies.

\section{The basic properties of a general piecewise-linear map}
\label{sec:symbolic}
\setcounter{equation}{0}

Let $f : \mathbb{R}^N \to \mathbb{R}^N$ be defined by
\begin{equation}
f(x) = \begin{cases}
A_L x + b \;, & e_1^{\sf T} x \le 0 \;, \\
A_R x + b \;, & e_1^{\sf T} x \ge 0 \;,
\end{cases}
\label{eq:f}
\end{equation}
where $A_L$ and $A_R$ are real-valued $N \times N$ matrices and $b \in \mathbb{R}^N$.
Throughout this paper, $e_i$ denotes the $i^{\rm th}$ coordinate vector of $\mathbb{R}^N$.
Thus, in particular, $e_1^{\sf T} x$ is the first component of $x \in \mathbb{R}^N$.

On the hyperplane $e_1^{\sf T} x = 0$, which we denote by $\Sigma$,
the function $f$ is continuous but not differentiable (unless $A_L = A_R$).
The assumption of continuity of $f$ on $\Sigma$ implies that $A_L$ and $A_R$
differ in only their first columns.
That is, there exists $\xi \in \mathbb{R}^N$ such that
\begin{equation}
A_R = A_L + \xi e_1^{\sf T} \;.
\label{eq:xi}
\end{equation}

In the context of dynamical systems,
$f$ is a piecewise-linear continuous map with switching manifold $\Sigma$.
It is general in the sense that any piecewise-linear continuous function with two components
can be put in the form \eqref{eq:f} by choosing coordinates so that
the boundary between the components is $\Sigma$.
Given any $x_0 \in \mathbb{R}^N$,
we let $\{ x_i \}$ denote the forward orbit of $x_0$ under $f$,
as defined by $x_{i+1} = f(x_i)$, for all $i \ge 0$.

\subsection{Using words to encode finite parts of orbits}
\label{sub:words}

It is helpful to encode the itinerary of orbits of $f$, relative to $\Sigma$, using the alphabet $\{ L,R \}$.
Throughout this paper we write a word $\cX$ of length $n$ as
\begin{equation}
\cX = \cX_0 \cdots \cX_{n-1} \;,
\nonumber
\end{equation}
where $\cX_i \in \{ L,R \}$ for each $i = 0,\ldots,n-1$.

\begin{definition}
Given $x_0 \in \mathbb{R}^N$ and a word $\cX$ of length $n$,
we say that $x_0$ {\em follows} $\cX$ under $f$ if
\begin{equation}
\begin{gathered}
\cX_i = L {\rm ~whenever~} e_1^{\sf T} x_i < 0 \;, \\
\cX_i = R {\rm ~whenever~} e_1^{\sf T} x_i > 0 \;,
\end{gathered}
\label{eq:admCondition}
\end{equation}
over all $i = 0,\ldots,n-1$.
\label{df:follow}
\end{definition}

If $x_i \in \Sigma$, for some $i = 0,\ldots,n-1$,
then there is no restriction on $\cX_i$ in Definition \ref{df:follow}.
In this case $x_0$ follows more than one word of length $n$.
For example, the orbit shown in Fig.~\ref{fig:orbitEx} has $x_0, x_3 \in \Sigma$
and so $x_0$ follows four words of length five under $f$:
\begin{equation}
LLLLR \;, \qquad
LLLRR \;, \qquad
RLLLR \;, \qquad
RLLRR \;.
\label{eq:wordsEx}
\end{equation}
Here we formalise this observation.
Throughout this paper $\cX^{\overline{i}}$ denotes the word formed
from $\cX$ by flipping the symbol $\cX_i$ (from $L$ to $R$, or vice-versa).

\begin{figure}[b!]
\begin{center}
\setlength{\unitlength}{1cm}
\begin{picture}(8,4)
\put(0,0){\includegraphics[height=4cm]{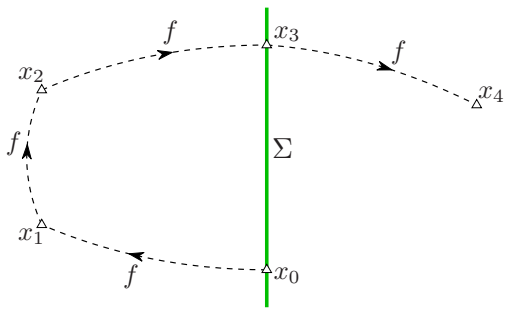}}
\put(4.08,.39){\footnotesize $x_0$}
\put(.68,.9){\footnotesize $x_1$}
\put(.68,3.05){\footnotesize $x_2$}
\put(4.08,3.6){\footnotesize $x_3$}
\put(6.8,2.8){\footnotesize $x_4$}
\put(2.08,.34){\footnotesize $f$}
\put(.52,2.1){\footnotesize $f$}
\put(2.6,3.58){\footnotesize $f$}
\put(5.64,3.32){\footnotesize $f$}
\put(4.06,2){\small $\Sigma$}
\end{picture}
\caption{
A sketch of the initial part of the forward orbit of a point $x_0 \in \mathbb{R}^N$
that follows each of the words in \eqref{eq:wordsEx} under the map $f$.
\label{fig:orbitEx}
}
\end{center}
\end{figure}

\begin{lemma}
Suppose $x_0$ follows $\cX$ under $f$ and $x_i \in \Sigma$ for some $i = 0,\ldots,n-1$.
Then $x_0$ also follows $\cX^{\overline{i}}$ under $f$.
\label{le:follow}
\end{lemma}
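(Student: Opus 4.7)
The plan is to verify the conclusion by a direct appeal to Definition \ref{df:follow}. I would first observe that the forward orbit $\{ x_i \}$ is determined entirely by $x_0$ and the map $f$; the word $\cX$ is merely a symbolic label attached to the orbit, not an input to the dynamics. In particular, although $f$ is defined by two formulas on the two sides of $\Sigma$, continuity (which enforces \eqref{eq:xi}) ensures that $f$ is single-valued on $\Sigma$ itself, so $x_{i+1}$ is unambiguous even when $x_i \in \Sigma$. Hence the orbit used to test whether $x_0$ follows $\cX^{\overline{i}}$ is literally the same sequence $\{ x_i \}$ that witnesses $x_0$ following $\cX$.

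With that in hand, I would check the two implications in \eqref{eq:admCondition} for $\cX^{\overline{i}}$ at each index $j = 0, \ldots, n-1$, splitting into the cases $j \neq i$ and $j = i$. For every $j \neq i$ we have $\cX^{\overline{i}}_j = \cX_j$, so the implications at index $j$ are identical to those already established for $\cX$ and therefore hold by hypothesis. For $j = i$, the hypothesis $x_i \in \Sigma$ gives $e_1^{\sf T} x_i = 0$, so neither $e_1^{\sf T} x_i < 0$ nor $e_1^{\sf T} x_i > 0$ is satisfied, and both implications in \eqref{eq:admCondition} are vacuously true regardless of whether $\cX^{\overline{i}}_i$ is $L$ or $R$. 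Combining the two cases yields the conclusion that $x_0$ follows $\cX^{\overline{i}}$.

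There is no real obstacle here: the lemma is essentially a restatement of the remark made just before it, namely that Definition \ref{df:follow} imposes no constraint on $\cX_j$ whenever $x_j \in \Sigma$, which is precisely the freedom that flipping the $i$-th symbol exploits. The only point worth flagging in the write-up is the appeal to continuity of $f$ to guarantee that the orbit is well-defined through a point on $\Sigma$, so that the two symbolic encodings $\cX$ and $\cX^{\overline{i}}$ genuinely describe the same sequence of iterates.
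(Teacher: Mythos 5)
Your proof is correct and matches the paper's reasoning exactly: the paper gives no formal proof, stating the lemma as a formalisation of the preceding remark that Definition \ref{df:follow} places no constraint on $\cX_i$ when $x_i \in \Sigma$, which is precisely your case analysis. Your additional observation that continuity makes the orbit well-defined through $\Sigma$ is a sensible point to flag, though the paper takes it for granted.
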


\subsection{Line segments}
\label{sub:lineSegments}

We use $\cL$ to denote line segments in $\mathbb{R}^N$,
with square brackets when endpoints are included in the segment and round brackets otherwise.
For instance, given $y,z \in \mathbb{R}^N$,
\begin{equation}
\cL[y,z] = \left\{ (1-s) y + s z ~\middle|~ 0 \le s \le 1 \right\}.
\nonumber
\end{equation}

If $y$ and $z$ lie on the same side of $\Sigma$, then each $x \in \cL[y,z]$ also lies on this side of $\Sigma$.
In this case, the image of $\cL[y,z]$ under $f$ is given using only one component of $f$.
Both components of $f$ are affine, hence this image is another line segment.
By repeating this observation and extending it to allow points on $\Sigma$ we obtain the following result\removableFootnote{
Here is a proof:

For ease of explanation and without loss of generality, suppose $\cX_0 = R$.
Since $y_0$ and $z_0$ both follow $R$ under $f$,
we have $e_1^{\sf T} y_0 \ge 0$ and $e_1^{\sf T} z_0 \ge 0$.
Thus for any $x_0 \in \cL[y_0,z_0]$, we also have $e_1^{\sf T} x_0 \ge 0$,
and so $x_0$ also follows $R$ under $f$.
Moreover $x_1 \in \cL[y_1,z_1]$ because $f_R(x) = A_R x + b$ is affine.
By repeating this argument $(n-1)$ times,
we conclude that $x_0$ follows $\cX$ under $f$, as required.
}.

\begin{lemma}
Suppose $y_0$ and $z_0$ follow $\cX$ under $f$.
Then every $x_0 \in \cL[y_0,z_0]$ also follows $\cX$ under $f$.
\label{le:lineSegment}
\end{lemma}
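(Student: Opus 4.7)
The plan is a straightforward induction on the length $n$ of the word $\cX$, using two elementary facts: the half-spaces $\{e_1^{\sf T} x \ge 0\}$ and $\{e_1^{\sf T} x \le 0\}$ are convex, and each component of $f$ is affine, hence preserves convex combinations.

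The base case $n=0$ is vacuous. For the inductive step, suppose the result holds for all words of length $n-1$, and that $y_0$ and $z_0$ both follow a word $\cX$ of length $n$. Let $x_0 = (1-s) y_0 + s z_0$ for some $s \in [0,1]$. First I would verify the symbol $\cX_0$: if $\cX_0 = R$, then $e_1^{\sf T} y_0 \ge 0$ and $e_1^{\sf T} z_0 \ge 0$, so by linearity of $e_1^{\sf T}$ and nonnegativity of $1-s$ and $s$, we obtain $e_1^{\sf T} x_0 \ge 0$; symmetrically for $\cX_0 = L$. In either case $x_0$ satisfies \eqref{eq:admCondition} for $i = 0$.

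Next I would verify that $x_1 := f(x_0)$ lies in $\cL[y_1, z_1]$. Continuing the case $\cX_0 = R$, all three points $y_0, z_0, x_0$ lie in the closed half-space $\{e_1^{\sf T} x \ge 0\}$, on which $f$ agrees with the single affine map $x \mapsto A_R x + b$ (the continuity of $f$ on $\Sigma$ ensures no ambiguity at points where $e_1^{\sf T} x = 0$). Hence
\begin{equation}
x_1 = A_R\bigl((1-s) y_0 + s z_0\bigr) + b = (1-s)(A_R y_0 + b) + s (A_R z_0 + b) = (1-s) y_1 + s z_1,
\nonumber
\end{equation}
so $x_1 \in \cL[y_1, z_1]$; the case $\cX_0 = L$ is identical with $A_L$ in place of $A_R$. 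Now $y_1$ and $z_1$ follow the shortened word $\cX_1 \cdots \cX_{n-1}$, so by the inductive hypothesis so does $x_1$. Combined with the verification of $\cX_0$, this shows $x_0$ follows $\cX$.

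There is really no obstacle here; the only point worth flagging is the handling of boundary points on $\Sigma$, which is absorbed cleanly by the continuity assumption \eqref{eq:xi} so that the choice of formula $A_L$ or $A_R$ is irrelevant when $e_1^{\sf T} x = 0$.
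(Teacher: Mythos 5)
Your proof is correct and is essentially the same argument the paper gives (in an omitted footnote): fix the case $\cX_0 = R$, use convexity of the closed half-space to verify the symbol, use affineness of $A_R x + b$ to get $x_1 \in \cL[y_1,z_1]$, and iterate; your version merely packages the iteration as a formal induction. The remark about continuity resolving the ambiguity on $\Sigma$ is a worthwhile clarification but does not change the substance.
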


\subsection{Periodic solutions}
\label{sub:perSolns}

Let
\begin{equation}
f_L(x) = A_L x + b \;, \qquad
f_R(x) = A_R x + b \;,
\nonumber
\end{equation}
denote the two components of $f$.

\begin{definition}
An {\em $\cX$-cycle} is an $n$-tuple, $\left\{ x^{\cX}_i \right\}$, for which
\begin{equation}
f_{\cX_0} \!\left( x^{\cX}_0 \right) = x^{\cX}_1 ,~ 
f_{\cX_1} \!\left( x^{\cX}_1 \right) = x^{\cX}_2 ,\;\ldots,~
f_{\cX_{n-1}} \!\left( x^{\cX}_{n-1} \right) = x^{\cX}_0 \;.
\label{eq:Xcycle}
\end{equation}
\label{df:Xcycle}
\end{definition}

If $x^{\cX}_0$ follows $\cX$ under $f$,
then the $\cX$-cycle is in fact an orbit
of $f$ and said to be {\em admissible}.
The next result is an immediate consequence of Lemma \ref{le:follow}.

\begin{lemma}
Suppose $\left\{ x^{\cX}_i \right\}$ is an $\cX$-cycle
and $x^{\cX}_j \in \Sigma$ for some $j = 0,\ldots,n-1$.
Then $\left\{ x^{\cX}_i \right\}$ is also an $\cX^{\overline{j}}$-cycle.
\label{le:XcycleFlip}
\end{lemma}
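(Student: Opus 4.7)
The plan is to verify the defining equations of an $\cX^{\overline{j}}$-cycle directly from those of the $\cX$-cycle, using the continuity of $f$ at the point $x_j^{\cX} \in \Sigma$. Comparing the two systems of equations in \eqref{eq:Xcycle} for the words $\cX$ and $\cX^{\overline{j}}$, the only index where they differ is $i = j$: the $\cX$-cycle equations include $f_{\cX_j}\!\bigl(x_j^{\cX}\bigr) = x_{j+1}^{\cX}$, while the $\cX^{\overline{j}}$-cycle equations include $f_{\cX_j^{\overline{j}}}\!\bigl(x_j^{\cX}\bigr) = x_{j+1}^{\cX}$ (with indices taken modulo $n$, so $x_n^{\cX}$ means $x_0^{\cX}$). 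All other equations are identical because $\cX^{\overline{j}}_i = \cX_i$ for $i \ne j$.

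Next I would show that $f_L$ and $f_R$ agree on $\Sigma$. For any $x \in \mathbb{R}^N$, relation \eqref{eq:xi} gives
\begin{equation}
f_R(x) - f_L(x) = (A_R - A_L)\,x = \xi\,\bigl(e_1^{\sf T} x\bigr),
\nonumber
\end{equation}
which vanishes whenever $e_1^{\sf T} x = 0$. Applying this at $x = x_j^{\cX}$, whose defining hypothesis is $e_1^{\sf T} x_j^{\cX} = 0$, yields $f_L\!\bigl(x_j^{\cX}\bigr) = f_R\!\bigl(x_j^{\cX}\bigr)$. Hence the one differing equation is automatically preserved under flipping $\cX_j$, and $\left\{x_i^{\cX}\right\}$ satisfies the $\cX^{\overline{j}}$-cycle equations.

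There is essentially no obstacle here; the statement is a direct consequence of the continuity condition \eqref{eq:xi} together with the observation that flipping a single symbol affects only one equation in \eqref{eq:Xcycle}. The authors' phrase ``immediate consequence of Lemma \ref{le:follow}'' is presumably shorthand for the fact that the very same symbol-flip mechanism underlies both results: for Lemma \ref{le:follow} the flip preserves the strict-inequality conditions \eqref{eq:admCondition}, while here it preserves the cycle equations themselves, in both cases because $f_L$ and $f_R$ coincide on $\Sigma$.
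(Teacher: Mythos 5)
Your proof is correct and captures exactly the mechanism the paper has in mind: the paper offers no written argument beyond calling the lemma an immediate consequence of Lemma \ref{le:follow}, and the substance in both cases is that $f_L$ and $f_R$ agree on $\Sigma$ because of \eqref{eq:xi}. If anything your direct verification of the equations \eqref{eq:Xcycle} is slightly more careful, since Definition \ref{df:follow} refers to the forward orbit under $f$ itself, so Lemma \ref{le:follow} applies verbatim only to admissible cycles, whereas your argument covers arbitrary (possibly non-admissible) $\cX$-cycles as the statement requires.
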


For example, the periodic solution shown in Fig.~\ref{fig:XcycleEx}
has two points on $\Sigma$ and is an $\cX$-cycle where $\cX$ is any of the four words
\eqref{eq:wordsEx}.

\begin{figure}[b!]
\begin{center}
\setlength{\unitlength}{1cm}
\begin{picture}(8,4)
\put(0,0){\includegraphics[height=4cm]{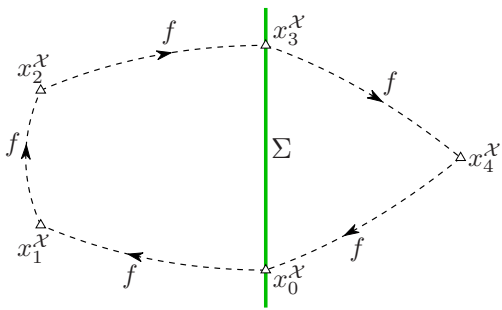}}
\put(4.08,.28){\footnotesize $x^{\cX}_0$}
\put(.68,.69){\footnotesize $x^{\cX}_1$}
\put(.68,3.09){\footnotesize $x^{\cX}_2$}
\put(4.08,3.6){\footnotesize $x^{\cX}_3$}
\put(6.68,1.9){\footnotesize $x^{\cX}_4$}
\put(2.08,.34){\footnotesize $f$}
\put(.52,2.1){\footnotesize $f$}
\put(2.6,3.58){\footnotesize $f$}
\put(5.54,2.9){\footnotesize $f$}
\put(5.1,.7){\footnotesize $f$}
\put(4.06,2){\small $\Sigma$}
\end{picture}
\caption{
A periodic solution of $f$ that is an $\cX$-cycle for any of the words in \eqref{eq:wordsEx}.
\label{fig:XcycleEx}
}
\end{center}
\end{figure}

\subsection{Existence, uniqueness, and stability of periodic solutions}
\label{sub:existenceStability}

Let
\begin{equation}
f_{\cX} = f_{\cX_{n-1}} \circ \cdots \circ f_{\cX_0}
\nonumber
\end{equation}
represent the composition of $f_L$ and $f_R$ in the order specified by $\cX$.
The point $x^{\cX}_0$ of an $\cX$-cycle is a fixed point of $f_{\cX}$.
Furthermore, each $x^{\cX}_i$ is a fixed point of $f_{\cX^{(i)}}$,
where throughout this paper we use $\cX^{(i)}$ to denote the $i^{\rm th}$ left cyclic permutation of $\cX$,
that is
\begin{equation}
\cX^{(i)} = \cX_i \cdots \cX_{n-1} \cX_0 \cdots \cX_{i-1} \;.
\nonumber
\end{equation}
The function $f_{\cX}$ is affine, and
\begin{equation}
f_{\cX}(x) = M_{\cX} x + P_{\cX} b \;,
\nonumber
\end{equation}
where
\begin{align}
M_{\cX} &= A_{\cX_{n-1}} \cdots A_{\cX_0} \;, \label{eq:MX} \\
P_{\cX} &= I + A_{\cX_{n-1}} + A_{\cX_{n-1}} A_{\cX_{n-2}} + \ldots +
A_{\cX_{n-1}} \cdots A_{\cX_1} \;. \label{eq:PX}
\end{align}
If $1$ is not an eigenvalue of $M_{\cX}$, then the $\cX$-cycle is unique and
\begin{equation}
x^{\cX}_i = \left( I - M_{\cX^{(i)}} \right)^{-1} P_{\cX^{(i)}} b \;,
\quad {\rm for~all~} i = 0,\ldots,n-1 \;.
\nonumber
\end{equation}
If the $\cX$-cycle is admissible with no points on $\Sigma$,
then every point in some open region containing $x^{\cX}_0$ follows $\cX$ under $f$.
The $\cX$-cycle is therefore a stable periodic solution of $f$
if and only if all eigenvalues of $M_{\cX}$ have modulus less than or equal to $1$.
The $\cX$-cycle is furthermore an asymptotically stable periodic solution of $f$
if and only if all eigenvalues of $M_{\cX}$ have modulus strictly less than $1$.
If instead the $\cX$-cycle involves points on $\Sigma$,
then its stability is much more difficult to characterise \cite{DoKi08}.

\subsection{Using symbol sequences to encode orbits}
\label{sub:orbits}

We encode orbits of $f$ with bi-infinite sequences
\begin{equation}
\cS = \cdots \cS_{-1} \cS_0 \cS_1 \cdots \;,
\nonumber
\end{equation}
where $\cS_i \in \{ L,R \}$ for all $i \in \mathbb{Z}$.

\begin{definition}
An {\em $\cS$-orbit} is a sequence $\left\{ x^{\cS}_i \right\}$ for which
\begin{equation}
x^{\cS}_{i+1} = f_{\cS_i} \!\left( x^{\cS}_i \right) \;,
\nonumber
\end{equation}
for all $i \in \mathbb{Z}$.
\label{df:Sorbit}
\end{definition}

Admissibility of $\cS$-orbits parallels that of $\cX$-cycles.
Specifically, if an $\cS$-orbit satisfies \eqref{eq:admCondition} for all $i \in \mathbb{Z}$
(using $\cS$ in place of $\cX$),
then it is an orbit of $f$ and we say it is admissible.

\section{Symbolic representations for simple homoclinic connections}
\label{sec:symHC}
\setcounter{equation}{0}

In this section we convert elementary observations regarding homoclinic connections
into formal results stated in terms of the words and sequences introduced in \S\ref{sec:symbolic}.
These results motivate the assumptions contained in Theorems \ref{th:infinity} and \ref{th:homoclinic}
and establish some restrictions on the words that can be used in these theorems.

Let $\cX$ be a word of length $n$,
let $\left\{ x^{\cX}_i \right\}$ be an admissible $\cX$-cycle\removableFootnote{
I see no problems with the arguments in this section in the case
that the $\cX$-cycle has one or more points on $\Sigma$.
},
and let $\{ y_i \}$ be an orbit homoclinic to $\left\{ x^{\cX}_i \right\}$.
That is, the sequence $\{ y_i \}$ converges to the $\cX$-cycle as $i \to \pm \infty$.
Fig.~\ref{fig:HCEx} shows an example.

\begin{figure}[b!]
\begin{center}
\setlength{\unitlength}{1cm}
\begin{picture}(8,8)
\put(0,0){\includegraphics[height=8cm]{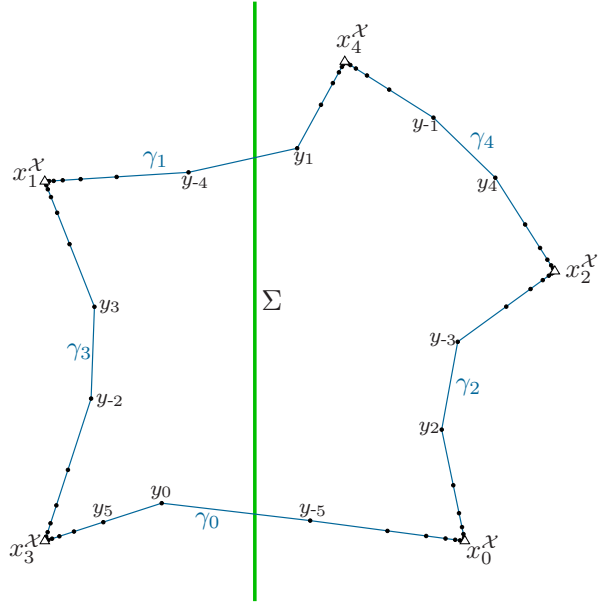}}
\put(6.4,.6){\footnotesize $x^{\cX}_0$}
\put(.37,5.62){\footnotesize $x^{\cX}_1$}
\put(7.72,4.34){\footnotesize $x^{\cX}_2$}
\put(.32,.62){\footnotesize $x^{\cX}_3$}
\put(4.67,7.4){\footnotesize $x^{\cX}_4$}
\put(3.68,3.9){\small $\Sigma$}
\put(2.8,1.05){\footnotesize \color{myBlue} $\gamma_0$}
\put(2.1,5.85){\footnotesize \color{myBlue} $\gamma_1$}
\put(6.26,2.83){\footnotesize \color{myBlue} $\gamma_2$}
\put(1.09,3.3){\footnotesize \color{myBlue} $\gamma_3$}
\put(6.45,6.1){\footnotesize \color{myBlue} $\gamma_4$}
\put(4.18,1.2){\scriptsize $y_{\hspace{.8mm}5}$}
\put(4.33,1.178){\tiny -}
\put(2.61,5.56){\scriptsize $y_{\hspace{.8mm}4}$}
\put(2.76,5.538){\tiny -}
\put(5.91,3.47){\scriptsize $y_{\hspace{.8mm}3}$}
\put(6.06,3.448){\tiny -}
\put(1.48,2.66){\scriptsize $y_{\hspace{.8mm}2}$}
\put(1.63,2.638){\tiny -}
\put(5.68,6.32){\scriptsize $y_{\hspace{.8mm}1}$}
\put(5.83,6.298){\tiny -}
\put(2.2,1.44){\scriptsize $y_0$}
\put(4.1,5.88){\scriptsize $y_1$}
\put(5.77,2.29){\scriptsize $y_2$}
\put(1.52,3.91){\scriptsize $y_3$}
\put(6.54,5.52){\scriptsize $y_4$}
\put(1.43,1.19){\scriptsize $y_5$}
\end{picture}
\caption{
A sketch of the phase space of $f$
showing an orbit $\{ y_i \}$ homoclinic to an $\cX$-cycle, where $\cX = RLRLR$.
The orbit is an $\cS$-orbit, where $\cS$ is given by \eqref{eq:S} and $\cY = LR$.
The piecewise-linear paths $\gamma_0,\ldots,\gamma_4$, \eqref{eq:gammai}, are also shown.
Notice, $\gamma_0$ is a path from $x^{\cX}_0$ to $x^{\cX}_3$ because $d = 3$, as given by \eqref{eq:d}
with $n = 5$ and $p = 2$ (the lengths of $\cX$ and $\cY$).
\label{fig:HCEx}
}
\end{center}
\end{figure}

The orbit $\{ y_i \}$ is an $\cS$-orbit, where
\begin{equation}
\cS = \cX^{\infty} \cY \cX^{\infty} \;,
\label{eq:S}
\end{equation}
for some word $\cY$ of length $p$.
For simplicity we assume that $y_0$ corresponds to the symbol $\cY_0$.
Equation \eqref{eq:S} is short-hand for the precise description
\begin{equation}
\cS_i = \begin{cases}
\cX_{i {\rm \,mod\,} n} \;, & i < 0 \;, \\
\cY_i \;, & i = 0,\ldots,p-1 \;, \\
\cX_{(i-p) {\rm \,mod\,} n} \;, & i \ge p \;,
\end{cases}
\label{eq:S2}
\end{equation}
where, for any $i \in \mathbb{Z}$,
we write $i {\rm ~mod~} n$ to denote the integer in $\{ 0,\ldots,n-1 \}$
that differs from $i$ by an integer multiple of $n$.
We also assume $\cX_0 \ne \cY_0$
so that as we follow the points of the $\cS$-orbit in order,
the word $\cY$ starts at the first point at which periodicity following $\cX$ ceases\removableFootnote{
Originally I thought not to include this assumption as so to be more general,
however this requires more notation (two different $\hat{\imath}$'s and $\hat{\jmath}$'s),
plus in the statement of Theorem \ref{th:infinity} we include this assumption
and it is convenient to have the results here, particularly Lemma \ref{le:XConstraint},
apply specifically to this case.
}.

We now look at the behaviour of the $\cS$-orbit more closely.
For each $i = 0,\ldots,n-1$, let
\begin{equation}
\gamma_i = \bigcup_{j \in \mathbb{Z}} \cL \!\left[ y_{(j-1) n + i}, y_{jn+i} \right),
\label{eq:gammai}
\end{equation}
as shown in Fig.~\ref{fig:HCEx}.
Let us first discuss $\gamma_0$.
By \eqref{eq:S2}, as $j \to -\infty$ the sequence $\{ y_{j n} \}$ converges to $x^{\cX}_0$.
Similarly, as $j \to \infty$, the sequence $\{ y_{j n} \}$ converges to $x^{\cX}_d$, where
\begin{equation}
d = -p {\rm ~mod~} n \;.
\label{eq:d}
\end{equation}
Therefore $\gamma_0$ is a path from $x^{\cX}_0$ to $x^{\cX}_d$.
Similarly each $\gamma_i$ is a path from $x^{\cX}_i$ to $x^{\cX}_{(d + i) {\rm \,mod\,} n}$.

Let
\begin{equation}
\cP = \bigcup_{i=0}^{n-1} \gamma_i \cup x^{\cX}_i
\nonumber
\end{equation}
denote the union of the paths and the $\cX$-cycle.
In general, $\cP$ is a collection of loops involving the points of the $\cX$-cycle.
If
\begin{equation}
d \ne 0 \;, \qquad
\gcd(d,n) = 1 \;,
\label{eq:dConstraints}
\end{equation}
where $\gcd$ abbreviates greatest common divisor,
then $\cP$ consists of a single loop, as in Fig.~\ref{fig:HCEx}.

Next we look for where $\cP$ intersects $\Sigma$.
Since $\cX_0 \ne \cY_0$, we have $\cS_{-n} \ne \cS_0$. 
Assuming for simplicity that $y_{-n}$ and $y_0$ do not lie on $\Sigma$,
they therefore lie on different sides of $\Sigma$.
Thus the line segment $\cL \!\left[ y_{-n}, y_0 \right)$ of $\cP$ intersects $\Sigma$.
Since $\cP$ is a collection of loops, it must intersect $\Sigma$ elsewhere.
We now consider the simplest scenario: that $\cP$ intersects $\Sigma$ at only one other point.

More precisely, we suppose that the symbols $\cS_{(j-1)n+i}$ and $\cS_{j n + i}$,
that correspond to the endpoints of $\left[ y_{(j-1) n + i}, y_{jn+i} \right]$,
differ only for $i = j = 0$ and one other pair of values, call them $\hat{\imath}$ and $\hat{\jmath}$.
That is,
\begin{equation}
\cS_{(j-1)n+i} \ne \cS_{j n + i} {\rm ~if~and~only~if~}
i = j = 0 {\rm ~or~} i = \hat{\imath}, j = \hat{\jmath} \;.
\label{eq:twoCrossingsAlt}
\end{equation}
In view of \eqref{eq:S2}, we must have $\hat{\jmath} \ge 0$.
Assuming $\hat{\imath} \ne 0$
(which is the case whenever $\cX$ contains both $L$'s and $R$'s)
the elements of $\cS$ are given in terms of the elements of $\cX$ by
\begin{equation}
\begin{split}
\cS_{j n} &= \cX_0 {\rm ~if~and~only~if~} j < 0 \;, \\
\cS_{j n + \hat{\imath}} &= \cX_{\hat{\imath}} {\rm ~if~and~only~if~} j < \hat{\jmath} \;, \\
\cS_{j n + i} &= \cX_i {\rm ~otherwise} \;.
\end{split}
\end{equation}

\begin{lemma}
Let $\cX$ and $\cY$ be words of length $n$ and $p$ with $\cX_0 \ne \cY_0$.
Suppose $\cS$, as given by \eqref{eq:S}, satisfies \eqref{eq:twoCrossingsAlt}.
Then
\begin{equation}
\cX \cY = \left( \cY \cX \right)^{\overline{0} \, \overline{\alpha}} \;,
\label{eq:XY}
\end{equation}
where $\alpha = \hat{\jmath} n + \hat{\imath}$.
\label{le:YConstraint}
\end{lemma}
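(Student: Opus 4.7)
The plan is to identify $\cX\cY$ and $\cY\cX$ as two overlapping length-$(n+p)$ windows of $\cS$ and read off the claim from the hypothesis \eqref{eq:twoCrossingsAlt}. First, using \eqref{eq:S2}, I would verify directly that
\[
\cS_{-n} \cS_{-n+1} \cdots \cS_{p-1} = \cX \cY \quad \text{and} \quad \cS_{0} \cS_{1} \cdots \cS_{n+p-1} = \cY \cX,
\]
since the blocks $\cS_{-n} \cdots \cS_{-1}$ and $\cS_{p} \cdots \cS_{n+p-1}$ each reproduce $\cX_0 \cdots \cX_{n-1}$ under the periodic formulas on the left and right of \eqref{eq:S2}, while $\cS_0 \cdots \cS_{p-1}$ is simply $\cY$.

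Next I would compare these two windows entry by entry. Position $k$ of the first window is $\cS_{k-n}$, and position $k$ of the second is $\cS_k$. Writing $k = jn + i$ with $0 \le i < n$ and $j = \lfloor k/n \rfloor$, this comparison becomes $\cS_{(j-1)n+i}$ versus $\cS_{jn+i}$, and by \eqref{eq:twoCrossingsAlt} these differ precisely when $(i,j) \in \{(0,0), (\hat{\imath},\hat{\jmath})\}$, corresponding to $k = 0$ and $k = \alpha$. Since the alphabet is $\{L,R\}$, differing symbols are exactly flips of each other, so $\cX\cY$ and $\cY\cX$ agree except that the symbols at positions $0$ and $\alpha$ are swapped, which is the asserted equation $\cX\cY = (\cY\cX)^{\overline{0}\,\overline{\alpha}}$.

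The only step requiring a little care -- and the main obstacle, such as it is -- is verifying that $\alpha$ lies in the valid position range $\{0, 1, \ldots, n+p-1\}$ of the word $\cY\cX$, so that the flipping notation is well defined. For this I would argue from \eqref{eq:S2}: if $k < 0$, then $k$ and $k-n$ both lie in the left periodic tail of $\cS$ and are congruent modulo $n$, giving $\cS_{k-n} = \cS_k$; similarly, if $k \ge n+p$, then $k$ and $k-n$ both lie in the right periodic tail and again $\cS_{k-n} = \cS_k$. Hence no crossing pair $(i,j)$ from \eqref{eq:twoCrossingsAlt} can correspond to a $k$ outside $[0, n+p)$, forcing $0 < \alpha < n+p$ (the strict lower bound coming from $(\hat{\imath},\hat{\jmath}) \ne (0,0)$) and completing the argument.
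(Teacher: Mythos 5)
Your proposal is correct and follows essentially the same route as the paper: both identify $(\cX\cY)_k = \cS_{k-n}$ and $(\cY\cX)_k = \cS_k$ via the decomposition $k = jn+i$ and read the two flips directly off \eqref{eq:twoCrossingsAlt}. Your closing verification that $\alpha$ lies in $\{1,\ldots,n+p-1\}$ is the same observation the paper records as a remark immediately after the lemma statement.
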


The identity \eqref{eq:XY} says that if we concatenate $\cY$ and $\cX$ to form the word $\cY \cX$ (of length $n+p$),
then flip the symbols for indices $0$ and $\alpha$, we obtain the concatenation of $\cY$ and $\cX$ in the reverse order.
Note that we always have $\alpha < n+p$ in view of \eqref{eq:S2}.

\begin{proof}
Choose any $k = 0,\ldots,n+p-1$.
Then $k = j n + i$ for unique values $j \in \mathbb{Z}$ and $i \in \{ 0,\ldots,n-1 \}$.
Since $\cS_0$ corresponds to $\cY_0$,
we have $(\cY \cX)_k = \cS_k = \cS_{j n + i}$,
and $(\cX \cY)_k = \cS_{k-n} = \cS_{(j-1) n + i}$.
By \eqref{eq:twoCrossingsAlt}, $(\cX \cY)_k = (\cY \cX)_k$ if and only if $k \ne 0, \alpha$.
Hence $\cX \cY = \left( \cY \cX \right)^{\overline{0} \, \overline{\alpha}}$ as required.
\end{proof}

The identity $\cX^{(d)} = \cX^{\overline{0} \, \overline{\hat{\imath}}}$
(where $\cX^{(d)}$ is the $d^{\rm th}$ left cyclic permutation of $\cX$)
follows immediately from \eqref{eq:twoCrossingsAlt}.
Here we show that this identity is more generally a consequence of \eqref{eq:XY}\removableFootnote{
It turns out that \eqref{eq:XY} also implies \eqref{eq:twoCrossings}.
Here is a proof that uses arguments in the proof of Lemma \ref{le:XConstraint}.

Choose any $j \in \mathbb{Z}$ and $i = 0,\ldots,n-1$.
By \eqref{eq:S}, if $j n + i < 0$, then $\cS_{j n + i} = \cX_i$,
as stated in \eqref{eq:twoCrossings}.
Moreover, if $j n + i \ge p$, then
$\cS_{j n + i} = \cX_{(j n + i - p) {\rm \,mod\,} n} = \cX_{(i+d) {\rm \,mod\,} n} = \cX^{(d)}_i$,
and so, by \eqref{eq:rotAlt}, $\cS_{j n + i} = \cX_i$ if and only if $i \ne 0, \hat{\imath}$,
as stated in \eqref{eq:twoCrossings}.
It remains to verify \eqref{eq:twoCrossings} for $0 \le j n + i < p$.
In this case, by \eqref{eq:S} we have $\cS_{j n + i} = \cY_{j n + i}$.
By following the arguments in the proof of Lemma \ref{le:XConstraint}, we obtain
$\cY_{j n + i} = (\cY \cX)_{k n + i} = \cX^{(d)}_i$ except with
$j < \hat{\jmath}$ and $i = \hat{\imath}$ because
$(\cX \cY)_{\hat{\jmath} n + \hat{\imath}} \ne (\cY \cX)_{\hat{\jmath} n + \hat{\imath}}$.
That is, in this case $\cS_{j n + i} = \cX^{(d)}_i$ unless
$j < \hat{\jmath}$ and $i = \hat{\imath}$ which,
in view of \eqref{eq:rotAlt}, completes our verification of \eqref{eq:twoCrossings}.
}.

\begin{lemma}
Let $\cX$ and $\cY$ be words of length $n$ and $p$ 
that satisfy \eqref{eq:XY} for some $\alpha \in \{ 1,\ldots,n+p-1 \}$,
and let $d$ be given by \eqref{eq:d}.
Then
\begin{equation}
\cX^{(d)} = \cX^{\overline{0} \, \overline{\hat{\imath}}} \;,
\label{eq:rotAlt}
\end{equation}
where $\hat{\imath} = \alpha {\rm ~mod~} n$.
\label{le:XConstraint}
\end{lemma}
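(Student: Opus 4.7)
The plan is to convert the hypothesis \eqref{eq:XY} into an almost-periodicity statement for the cyclically extended word $u = \cX\cY$, and then to read off \eqref{eq:rotAlt} by iterating this relation along short shift paths.

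I will set $u = \cX\cY$ (of length $N = n + p$) and extend it periodically to all of $\mathbb{Z}$. Since $\cY\cX$ is the cyclic rotation of $\cX\cY$ by $n$ places, $(\cY\cX)_k = u_{k + n}$ for every $k$, and so \eqref{eq:XY} becomes the almost-periodicity statement
$$u_k = u_{k + n} \iff k \not\equiv 0, \alpha \pmod{N},$$
supplemented by the flip inequalities $u_0 \ne u_n$ and $u_\alpha \ne u_{\alpha + n}$. Because $\cX_i = u_i$ and $\cX^{(d)}_i = u_{(i + d) \bmod n}$ for $i \in \{0, \ldots, n - 1\}$, proving \eqref{eq:rotAlt} amounts to comparing $u_i$ with $u_{(i + d) \bmod n}$.

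The crucial observation is that $d + p \equiv 0 \pmod n$, so iterating the shift $\psi(k) = (k + n) \bmod N$ from $i \in \{0, \ldots, n - 1\}$ first returns to the interval $[0, n)$ after $s(i) = \lceil (N - i)/n \rceil$ steps, landing precisely at $(i + d) \bmod n$. Chaining the almost-periodicity along this path, $u_i$ and $u_{(i + d) \bmod n}$ differ by the parity of the number of source positions $\psi^j(i)$, $j = 0, \ldots, s(i) - 1$, that lie in $\{0, \alpha\} \pmod N$. I will then verify that source $0$ arises only on the path from $i = 0$ (and only at $j = 0$), while source $\alpha$ arises only on the path from $i = \hat{\imath}$ (at $j = \hat{\jmath}$, with $\hat{\jmath} < s(\hat{\imath})$ since $\alpha < N$). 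Consequently $\cX^{(d)}_i = \cX_i$ for $i \notin \{0, \hat{\imath}\}$; $\cX^{(d)}_i \ne \cX_i$ for $i = 0$ or $i = \hat{\imath}$ when $\hat{\imath} \ne 0$; and when $\hat{\imath} = 0$ both bad sources appear on the path from $i = 0$ and the two flips cancel, giving $\cX^{(d)}_0 = \cX_0$. In every case this matches $\cX^{(d)} = \cX^{\overline{0}\,\overline{\hat{\imath}}}$.

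The hard part will be the source-counting step, which requires showing that $0$ and $\alpha$ appear in the shift orbit of $i$ exactly as claimed. This is routine but case-sensitive, depending on the relative sizes of $\alpha$, $n$, $p$, $\hat{\imath}$, and $\hat{\jmath}$, and mirrors the position-by-position analysis used in the proof of Lemma \ref{le:YConstraint}.
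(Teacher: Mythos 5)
Your proposal is correct and is essentially the paper's own argument in different clothing: the paper likewise steps $n$ places at a time through the concatenation (comparing $(\cX\cY)_{jn+i}$ with $(\cY\cX)_{jn+i}$ via \eqref{eq:XY} and using $(\cY\cX)_{(j-1)n+i} = \cY_{(j-1)n+i} = (\cX\cY)_{jn+i}$) until it lands at $\cX^{(d)}_i$, noting that the only flips occur at positions $0$ and $\alpha$, with the two flips cancelling in the special case $\hat{\imath}=0$. Your periodic-extension and parity-of-sources formulation is just a repackaging of that chain, and your source-counting claims (position $0$ only on the path from $i=0$, position $\alpha$ only on the path from $i=\hat{\imath}$ at step $\hat{\jmath}<s(\hat{\imath})$) are accurate, so the deferred verification is indeed routine.
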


\begin{proof}
Choose any $i \in \{ 0,\ldots,n-1 \}$.
Then $\cX_i = (\cX \cY)_i = (\cY \cX)_i$ unless $i=0$ in which case $\cX_i = (\cX \cY)_i \ne (\cY \cX)_i$.
Let $k$ be the unique integer for which $p \le k n + i < p + n$.
From $(\cY \cX)_i$ we now repeatedly step $n$ places along $\cY \cX$ until we reach $(\cY \cX)_{k n + i}$ as follows.
For each $j = 1,\ldots,k$, we have
$(\cY \cX)_{(j-1) n + i} = \cY_{(j-1) n + i} = (\cX \cY)_{j n + i} = (\cY \cX)_{j n + i}$
unless $i = \hat{\imath}$ and $j = \hat{\jmath}$ (which arises because $\hat{\jmath} n + \hat{\imath} < p + n$)
in which case $(\cX \cY)_{j n + i} \ne (\cY \cX)_{j n + i}$.
Since $k n + i \ge p$, we have
$(\cY \cX)_{k n + i} = \cX_{k n + i - p} = \cX_{(i+d) {\rm \,mod\,} n} = \cX^{(d)}_i$.
We have thus shown that $\cX_i = \cX^{(d)}_i$ if and only if $i \notin \{ 0, \hat{\imath} \}$,
except in the special case $\hat{\imath} = 0$ for which we have shown that $\cX_i = \cX^{(d)}_i$ for all $i$.
This verifies \eqref{eq:rotAlt}.
\end{proof}

Lastly we note that words satisfying \eqref{eq:rotAlt} are ``rotational'',
as defined below, if $d$ satisfies \eqref{eq:dConstraints}.
Such words are well-studied in the context of piecewise-linear maps
because they relate to rigid rotation on a circle \cite{SiMe09,Si16}.

\begin{definition}
Let $\ell$, $m$ and $n$ be positive integers with
\begin{equation}
\ell < n \;, \qquad
m < n \;, \qquad
\gcd(m,n) = 1 \;.
\nonumber
\end{equation}
Define a word $\cF[\ell,m,n]$ of length $n$ by
\begin{equation}
\cF[\ell,m,n] = \begin{cases}
L \;, & i m {\rm ~mod~} n < \ell \;, \\
R \;, & i m {\rm ~mod~} n \ge \ell \;,
\end{cases}
\nonumber
\end{equation}
for $i = 0,\ldots,n-1$.
Words $\cF[\ell,m,n]$ and their cyclic permutations are called {\em rotational}.
\label{df:rot}
\end{definition}

\begin{lemma}
Let $\cX$ be a word of length $n$ with $\cX_0 = R$.
Suppose \eqref{eq:rotAlt} is satisfied for some $\hat{\imath} = 1,\ldots,n-1$
and $d$ satisfying \eqref{eq:dConstraints}.
Let $m$ denote the multiplicative inverse of $d$ modulo $n$
(i.e.~$m d {\rm ~mod~} n = 1$)
and let $\ell = \hat{\imath} m {\rm ~mod~} n$.
Then $\cX^{(d)} = \cF[\ell,m,n]$.
\label{le:rot}
\end{lemma}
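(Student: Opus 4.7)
The plan is to traverse the letters of $\cX$ in cyclic steps of size $d$ and use \eqref{eq:rotAlt} to pinpoint where the symbol switches. Because $\gcd(d,n) = 1$, the map $k \mapsto k d {\rm ~mod~} n$ is a bijection of $\{0,\ldots,n-1\}$ onto itself, so I would introduce the auxiliary sequence
\[
y_k = \cX_{k d {\rm \,mod\,} n} \;, \qquad k = 0, 1, \ldots, n \;,
\]
which is a cyclic reordering of the letters of $\cX$ with $y_n = y_0 = \cX_0 = R$.

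Rewritten as a statement about consecutive letters, \eqref{eq:rotAlt} says $\cX_{(j+d) {\rm \,mod\,} n} \ne \cX_j$ if and only if $j \in \{0, \hat{\imath}\}$. Substituting $j = k d {\rm ~mod~} n$ translates this to the $y$-sequence as: $y_{k+1} \ne y_k$ if and only if $k \in \{0, \ell\}$. Indeed $k d \equiv 0 \pmod{n}$ forces $k = 0$ in the range $\{0, \ldots, n-1\}$, while multiplying $k d \equiv \hat{\imath} \pmod{n}$ by $m$ and using $m d \equiv 1 \pmod{n}$ gives $k \equiv \ell \pmod{n}$. Note also that $\ell \ne 0$, because $\hat{\imath}$ and $m$ are each coprime to $n$, so $\ell \in \{1, \ldots, n-1\}$; and the side condition $\gcd(m,n) = 1$ needed for $\cF[\ell, m, n]$ to be defined is automatic from $m d \equiv 1 \pmod{n}$. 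Hence the cyclic word $y_0 y_1 \cdots y_{n-1} y_0$ has exactly two switches, one between $y_0$ and $y_1$ and one between $y_\ell$ and $y_{\ell+1}$; together with $y_0 = R$ this forces $y_k = L$ if and only if $1 \le k \le \ell$.

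To finish, I would invert the change of index. Since $\cX^{(d)}_i = \cX_{(i+d) {\rm \,mod\,} n}$ and $(i + d) m \equiv i m + 1 \pmod{n}$, one has $\cX^{(d)}_i = y_{(i m + 1) {\rm \,mod\,} n}$, and the characterisation above yields $\cX^{(d)}_i = L$ iff $(i m + 1) {\rm ~mod~} n \in \{1, \ldots, \ell\}$, iff $i m {\rm ~mod~} n < \ell$, matching the defining condition of $\cF[\ell, m, n]_i = L$ in Definition \ref{df:rot}. I expect the main obstacle to be nothing more than notational care: keeping the three indexings $\cX_i$, $\cX^{(d)}_i$, and $y_k$ aligned through the modular arithmetic, and confirming that boundary cases such as $\ell = n-1$ (in which the second switch coincides with the wrap-around $y_{n-1} \to y_0$) do not introduce a spurious third switch.
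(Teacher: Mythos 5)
Your argument is correct, but there is nothing in the paper to compare it against: the authors do not prove Lemma \ref{le:rot} in the text, deferring instead to the unpublished reference \cite{Si15c}. Your route --- reindexing the cyclic word by steps of $d$ via $y_k = \cX_{k d \,{\rm mod}\, n}$, translating \eqref{eq:rotAlt} into ``exactly two switches, at $k=0$ and $k=\ell$,'' and then undoing the reindexing through $\cX^{(d)}_i = y_{(im+1) \,{\rm mod}\, n}$ --- is a clean, self-contained proof, and the boundary case $\ell = n-1$ is indeed harmless because the switch set is pinned down by an ``if and only if,'' so no third switch can appear at the wrap-around. One small repair: your justification that $\ell \ne 0$ asserts that $\hat{\imath}$ is coprime to $n$, which is neither assumed nor generally true (for $\cX = RLLR$ one has $n = 4$, $d = 1$, $\hat{\imath} = 2$); what you actually need, and what holds, is only that $\gcd(m,n) = 1$ together with $1 \le \hat{\imath} \le n-1$, since $\hat{\imath} m \equiv 0 \pmod{n}$ would force $n \mid \hat{\imath}$. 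With that wording fixed, the proof stands, and it has the added value of making the paper self-contained on this point rather than reliant on \cite{Si15c}.
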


For a proof of Lemma \ref{le:rot} see \cite{Si15c}\removableFootnote{
One could go further and say that a word is rotational if and only if
flipping two symbols yields a cyclic permutation with $\gcd(d,n) = 1$.
However, this is a bit awkward to say precisely given that
I am always assuming that the first symbol to be flipping is with $i = 0$.
Note: both definitions implicitly require $n > 1$.
}.

\section{Subsumed homoclinic connections}
\label{sec:subsumed}
\setcounter{equation}{0}

Let $\cX$ be a word of length $n$.
Suppose that $\lambda_1$ and $\lambda_2$ are eigenvalues of $M_{\cX}$ with multiplicity one.
Let $\beta$ denote the maximum modulus of the remaining eigenvalues of $M_{\cX}$ and suppose
\begin{equation}
0 \le \beta < \lambda_2 < 1 < \lambda_1 \;.
\label{eq:eigenvalueCondition}
\end{equation}
Since $1$ is not an eigenvalue of $M_{\cX}$, the $\cX$-cycle is unique.
If it is admissible with no points on $\Sigma$, then as a periodic solution of $f$
it has a one-dimensional unstable manifold $W^u \!\left( \left\{ x^{\cX}_i \right\} \right)$
and an $(N-1)$-dimensional stable manifold $W^s \!\left( \left\{ x^{\cX}_i \right\} \right)$.

We now consider these manifolds as they emanate from $x^{\cX}_0$.
For the affine map $f_{\cX}$,
let $E^u \!\left( x^{\cX}_0 \right)$ and $E^s \!\left( x^{\cX}_0 \right)$
denote the unstable and stable manifolds of its fixed point $x^{\cX}_0$.
Then for the piecewise-linear map $f$,
the part of $W^u \!\left( \left\{ x^{\cX}_i \right\} \right)$
emanating from $x^{\cX}_0$ coincides with $E^u \!\left( x^{\cX}_0 \right)$,
and the part of $W^s \!\left( \left\{ x^{\cX}_i \right\} \right)$
emanating from $x^{\cX}_0$ coincides with $E^s \!\left( x^{\cX}_0 \right)$.
Note that $W^u \!\left( \left\{ x^{\cX}_i \right\} \right)$
has two branches; these emanate from the $\cX$-cycle in opposite directions.

For $i = 1,2$, let $\omega_i^{\sf T}$ and $\zeta_i$ be left and right eigenvectors of $M_{\cX}$
corresponding to $\lambda_i$, and with
\begin{equation}
\omega_i^{\sf T} \zeta_i = 1 \;,
\label{eq:eigenvectorNormalisation}
\end{equation}
which can always be achieved because $\lambda_1$ and $\lambda_2$ have multiplicity one\removableFootnote{
This is an immediate consequence of the facts that
the span of all the generalised eigenspaces is $\mathbb{R}^N$
and that left and right eigenvectors are orthogonal for different eigenvalues.
However, here is a more direct argument:

Let $\lambda$ be an eigenvalue of $A$ with multiplicity one,
and let $\omega^{\sf T}$ and $\zeta$ be left and right eigenvectors.
Suppose for a contradiction that $\omega^{\sf T} \zeta = 0$.
Then since $\zeta$ is in the nullspace of $\lambda I - A$,
by the fundamental theorem of linear algebra
$\omega$ is in the range of $(\lambda I - A)^{\sf T}$.
That is, there exists $u \in \mathbb{R}^N$ such that 
\begin{equation}
\omega^{\sf T} = u^{\sf T} (\lambda I - A) \;.
\label{eq:eigenvectorNormalisationProof}
\end{equation}
Multiplying both sides of \eqref{eq:eigenvectorNormalisationProof}
by $(\lambda I - A)$ gives ${\bf 0} = u^{\sf T} (\lambda I - A)^2$.
Then since the multiplicity of $\lambda$ is one,
$u$ must be a scalar multiple of $\omega$.
Substituting $u = k \omega$ into \eqref{eq:eigenvectorNormalisationProof} gives
$\omega^{\sf T} = k \omega^{\sf T} (\lambda I - A) = {\bf 0}$,
which is a contradiction.
}.
Then $E^u \!\left( x^{\cX}_0 \right)$ has direction $\zeta_1$
and $E^s \!\left( x^{\cX}_0 \right)$ includes direction $\zeta_2$.
Moreover, $\omega_1^{\sf T} \zeta_2 = 0$ and $\omega_2^{\sf T} \zeta_1 = 0$
(because $\lambda_1 \ne \lambda_2$ and with $i,j \in \{ 1,2 \}$ we have
$\left( \lambda_i \omega_i^{\sf T} \right) \zeta_j
= \omega_i^{\sf T} M_{\cX} \zeta_j
= \omega_i^{\sf T} \left( \lambda_j \zeta_j \right)$
and so $(\lambda_i - \lambda_j) \omega_i^{\sf T} \zeta_j = 0$).
Thus $E^u \!\left( x^{\cX}_0 \right)$ is orthogonal to $\omega_2$
and $E^s \!\left( x^{\cX}_0 \right)$ is orthogonal to $\omega_1$.
Since $\beta < \lambda_2$, direction $\zeta_2$ represents
the slowest direction for the dynamics within $E^s \!\left( x^{\cX}_0 \right)$.

If $e_1^{\sf T} \zeta_1 \ne 0$, then $E^u \!\left( x^{\cX}_0 \right)$
intersects $\Sigma$ at a unique point (denoted $y_0$ below), see Fig.~\ref{fig:InvManSchem}.

\begin{figure}[b!]
\begin{center}
\setlength{\unitlength}{1cm}
\begin{picture}(8,6)
\put(0,0){\includegraphics[height=6cm]{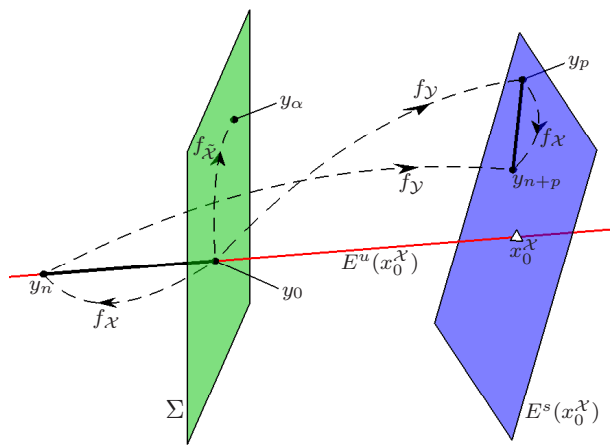}}
\put(6.64,2.64){\scriptsize $x^{\cX}_0$}
\put(4.4,2.49){\scriptsize $E^u(x^{\cX}_0)$}
\put(6.84,.5){\scriptsize $E^s(x^{\cX}_0)$}
\put(2.07,.5){\footnotesize $\Sigma$}
\put(3.62,2.13){\scriptsize $y_0$}
\put(.26,2.2){\scriptsize $y_n$}
\put(3.59,4.66){\scriptsize $y_{\alpha}$}
\put(7.42,5.2){\scriptsize $y_p$}
\put(6.68,3.62){\scriptsize $y_{n+p}$}
\put(1.1,1.75){\scriptsize $f_{\cX}$}
\put(5.36,4.82){\scriptsize $f_{\cY}$}
\put(2.43,4.05){\scriptsize $f_{\hspace{-.35mm}\tilde{\cX}}$}
\put(7.04,4.16){\scriptsize $f_{\cX}$}
\put(5.17,3.56){\scriptsize $f_{\cY}$}
\end{picture}
\caption{
A sketch of the phase space of $f$ relating to Proposition \ref{pr:subsumed}.
The manifolds $E^s \big( x^{\cX}_0 \big)$ and $E^u \big( x^{\cX}_0 \big)$ are
the stable and unstable manifolds of $x^{\cX}_0$ for $f_{\cX}$.
We show several important points of the homoclinic $\cS$-orbit $\{ y_i \}$
and how they are arrived at from $y_0$ under compositions of the components of $f$,
where $\tilde{\cX}$ denotes the first $\alpha$ symbols of $\cX \cY$.
\label{fig:InvManSchem}
}
\end{center}
\end{figure}

\begin{proposition}
Suppose
\vspace{-2mm}																					
\begin{enumerate}
\setlength{\itemsep}{0pt}
\item
$\cX$ and $\cY$ satisfy
$\cX \cY = \left( \cY \cX \right)^{\overline{0} \, \overline{\alpha}}$
for some $\alpha = 1,\ldots,n+p-1$;
\label{it:subsumedXY}
\item
the eigenvalues of $M_{\cX}$ satisfy \eqref{eq:eigenvalueCondition},
and $e_1^{\sf T} \zeta_1 \ne 0$;
\label{it:subsumedEigenvalues}
\item
the $\cX$-cycle is admissible with $x^{\cX}_0 \notin \Sigma$\removableFootnote{
Otherwise $y_i = x^{\cX}_0$ for all $i \in \mathbb{Z}$
which, rather oddly, doesn't appear to invalidate any of the other assumptions.

It is interesting that the proof doesn't require $x^{\cX}_i \notin \Sigma$ for all $i$.
};
\label{it:subsumedXcycle}
\item
there exists an admissible $\cS$-orbit, denoted $\{ y_i \}$ and where $\cS$ is given by \eqref{eq:S},
that is homoclinic to the $\cX$-cycle and
with $y_0 = E^u \!\left( x^{\cX}_0 \right) \cap \Sigma$ and $y_{\alpha} \in \Sigma$.
\label{it:subsumedSorbit}
\end{enumerate}
Then
\begin{equation}
W^u_{\rm br} \!\left( \left\{ x^{\cX}_i \right\} \right) =
\bigcup_{i \in \mathbb{Z}} \cL[y_i,y_{i+n})
\subset W^s \!\left( \left\{ x^{\cX}_i \right\} \right),
\nonumber
\end{equation}
where $W^u_{\rm br} \!\left( \left\{ x^{\cX}_i \right\} \right)$ denotes the branch
of $W^u \!\left( \left\{ x^{\cX}_i \right\} \right)$ that contains the $\cS$-orbit.
\label{pr:subsumed}
\end{proposition}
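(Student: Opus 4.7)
The plan is to propagate straight line segments through the dynamics one step at a time, using Lemmas~\ref{le:follow} and~\ref{le:lineSegment} to keep $f$ affine on each segment, and then exploiting the eigenstructure of $M_\cX$ to establish both the unstable-manifold equality and the containment in $W^s$.

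The initial observation is that $y_n = f_\cX(y_0)$. By hypothesis~(i) the words $\cX$ and $\cS_0 \cS_1 \cdots \cS_{n-1}$ differ in at most the positions $0$ and (if $\alpha < n$) $\alpha$; because $y_0, y_\alpha \in \Sigma$, Lemma~\ref{le:follow} lets me substitute $\cX_0$ for $\cS_0$ and (if needed) $\cX_\alpha$ for $\cS_\alpha$ along the forward orbit of $y_0$, so $f^n(y_0) = f_\cX(y_0)$. Since $y_0 \in E^u(x^\cX_0)$ and $f_\cX$ acts on $E^u(x^\cX_0)$ as multiplication by $\lambda_1 > 1$ about $x^\cX_0$, iterating yields $y_{kn} = f_\cX^k(y_0) \in E^u(x^\cX_0)$ for every $k \in \mathbb{Z}$, with $y_{kn} \to x^\cX_0$ as $k \to -\infty$.

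Next I build the unstable-manifold equality. The base segment $\cL[y_{-n}, y_0]$ lies on $E^u(x^\cX_0)$; Lemma~\ref{le:lineSegment} combined with the $\Sigma$-flip at $y_0$ shows every point of it follows $\cX$, so $f^n = f_\cX$ there, the backward iterates $f_\cX^{-k}(\cL[y_{-n},y_0])$ converge geometrically to $x^\cX_0$, and hence $\cL[y_{-n}, y_0] \subset W^u_{\rm br}(x^\cX_0)$. Iterating $f$ one step at a time then yields $f(\cL[y_i, y_{i+n}]) = \cL[y_{i+1}, y_{i+n+1}]$ for every $i \in \mathbb{Z}$: the only potential obstruction, that $y_i$ and $y_{i+n}$ lie on opposite open sides of $\Sigma$, occurs exactly at $i \in \{-n, \alpha-n\}$ because hypothesis~(i) forces $\cS_i = \cS_{i+n}$ at every other index, and at those two indices the endpoint on $\Sigma$ ($y_0$ or $y_\alpha$) together with Lemma~\ref{le:lineSegment} still makes $f$ affine on the segment. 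Forward invariance of $W^u_{\rm br}$ then gives $\bigcup_i \cL[y_i, y_{i+n}) \subset W^u_{\rm br}$. The reverse inclusion follows because, for each $j \in \{0,\ldots,n-1\}$, the concatenation $\bigcup_{k\in\mathbb{Z}}\cL[y_{kn+j}, y_{(k+1)n+j})$ is a connected $1$-dimensional curve in $W^u_{\rm br}(x^\cX_j)$ that accumulates on $x^\cX_j$ as $k \to -\infty$ and is unbounded as $k \to +\infty$, matching the topological type of the single $1$-dimensional branch.

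For the subsumed inclusion $\bigcup_i \cL[y_i, y_{i+n}) \subset W^s(\{x^\cX_i\})$, I take any $p \in \cL[y_i, y_{i+n}]$ and apply the same one-step propagation to the whole segment, obtaining $f^k(p) \in \cL[y_{i+k}, y_{i+n+k}]$ for every $k \ge 0$. Homoclinicity of $\{y_i\}$ forces both $y_{i+k}$ and $y_{i+n+k}$ to converge to the common cycle phase $x^\cX_{(i+k+d)\bmod n}$; because the orbit actually converges, the discrepancy $y_{i+k} - x^\cX_{(i+k+d)\bmod n}$ must lie in the stable subspace of the cyclically permuted $M_{\cX^{(\cdot)}}$, and condition~(ii) bounds its modulus by a factor of at most $\lambda_2 < 1$ per $n$-step. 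Hence $\cL[y_{i+k}, y_{i+n+k}]$ shrinks onto the cycle in Hausdorff distance, $f^k(p) \to \{x^\cX_i\}$, and $p \in W^s(\{x^\cX_i\})$. The principal obstacle is the position-by-position bookkeeping in the second paragraph: I must verify that every disagreement between $\cS_i$ and $\cS_{i+n}$ matches a $\Sigma$-incidence of $\{y_i\}$, and this is exactly what hypothesis~(i) combined with $y_0, y_\alpha \in \Sigma$ delivers; the geometric contraction in the last paragraph also rests on the homoclinic implication that $y_{i+k}$ has no asymptotic unstable component relative to the returning cycle phase, so that condition~(ii)'s stable bound actually applies.
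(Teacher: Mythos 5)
Your overall strategy is the paper's: use the flips at $y_0,y_\alpha\in\Sigma$ to conclude that $y_0$ follows $\cX\cY\cX^{\infty}$ as well as $\cY\cX^{\infty}$ (so $y_n=f_{\cX}(y_0)$), propagate line segments forward one step at a time, and obtain the $W^s$ containment from the shrinking segments. However, two steps are genuinely flawed. First, the claim that $y_{kn}=f_{\cX}^k(y_0)\in E^u\!\left(x^{\cX}_0\right)$ for \emph{every} $k\in\mathbb{Z}$ is false for $k\ge 2$: from $y_n$ onwards the itinerary is the cyclic shift $\cX^{(d)}$, not $\cX$, and indeed $f_{\cX}^k(y_0)$ escapes along $E^u\!\left(x^{\cX}_0\right)$ at rate $\lambda_1^k$ while $y_{kn}\to x^{\cX}_d$. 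You only need $k\le 1$, but for $k\le -1$ your justification (``iterating'') is also insufficient: $f$ and $f_{\cX}$ need not be invertible, so $f_{\cX}(y_{-n})=y_0\in E^u\!\left(x^{\cX}_0\right)$ does not by itself place $y_{-n}$ on $E^u\!\left(x^{\cX}_0\right)$ when $M_{\cX}$ is singular. The membership must be deduced from homoclinicity: $y_{jn}\to x^{\cX}_0$ as $j\to-\infty$, the points $y_{jn}$ ($j\le 0$) form a backward $f_{\cX}$-orbit, and the component of $y_{jn}-x^{\cX}_0$ off the $\zeta_1$-direction vanishes because it is the image, from arbitrarily far back, of a bounded vector under the contracting part of $M_{\cX}$. (This is exactly the point the authors flag when proving the analogous statement in Theorem \ref{th:infinity}.) Only then do you get $y_{-n},y_{-2n},\ldots\in\cL\!\left[x^{\cX}_0,y_0\right)$ and hence your base segment.

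Second, your reverse inclusion $W^u_{\rm br}\!\left(\left\{x^{\cX}_i\right\}\right)\subseteq\bigcup_{i}\cL[y_i,y_{i+n})$ rests on a false statement: the concatenated curve is \emph{not} ``unbounded as $k\to+\infty$''; as $k\to+\infty$ the segments $\cL[y_{kn+j},y_{(k+1)n+j})$ shrink onto the $\cX$-cycle (that is precisely the subsumed connection being proved), so the curve is bounded, and ``matching the topological type'' is not by itself an argument. The clean repair, and the paper's route, is to characterise the branch as the forward orbit of a fundamental piece near $x^{\cX}_0$: since $W^u_{\rm br}$ is linear near the cycle and $y_{jn}\to x^{\cX}_0$, one has $W^u_{\rm br}\!\left(\left\{x^{\cX}_i\right\}\right)=\bigcup_{i\ge 0}f^i\!\left(\cL\!\left(x^{\cX}_0,y_{jn}\right)\right)$ for all sufficiently negative $j$ (and in fact with $y_n$ in place of $y_{jn}$ once all of $\cL\!\left[x^{\cX}_0,y_0\right]$ is known to follow $\cX$), so $\cL[y_0,y_n)$ is a fundamental domain and your one-step propagation $f\!\left(\cL[y_{i-1},y_{i-1+n})\right)=\cL[y_i,y_{i+n})$ then yields the equality, both inclusions at once. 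Your propagation step itself is sound (the discrepancies $\cS_i\ne\cS_{i+n}$ occur only at $i=-n$ and $i=\alpha-n$, where the endpoint on $\Sigma$ keeps the segment in one closed half-space), and your final argument that the shrinking segments force containment in $W^s\!\left(\left\{x^{\cX}_i\right\}\right)$ is the paper's, though the excursion through stable-subspace contraction rates is unnecessary: convergence of both endpoints to the cycle already suffices.
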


Proposition \ref{pr:subsumed} tells us that if $f$ has a homoclinic $\cS$-orbit $\{ y_i \}$,
with $y_0, y_{\alpha} \in \Sigma$,
then, under certain conditions, the branch of $W^u \!\left( \left\{ x^{\cX}_i \right\} \right)$ containing $\{ y_i \}$
is given simply by connecting each $y_i$ to $y_{i+n}$ by a line segment,	
and that this branch is contained within $W^s \!\left( \left\{ x^{\cX}_i \right\} \right)$
and so the homoclinic connection is subsumed.

Proposition \ref{pr:subsumed} is proved below by first using Lemma \ref{le:follow}
to show that since $y_0, y_{\alpha} \in \Sigma$, both $y_0$ and $y_n$ follow $\cY \cX^{\infty}$ under $f$
and follow $\cX^{\infty}$ under backwards iteration of $f$.
It follows that the endpoints of each interval $\left[ y_i, y_{i+n} \right]$ cannot lie on different sides of $\Sigma$,
and the result is then a consequence of Lemma \ref{le:lineSegment}.

\begin{proof}[Proof of Proposition \ref{pr:subsumed}]

By assumption, $y_0$ follows $\cY \cX^{\infty}$ under $f$
(meaning $y_0$ follows $\cY \cX^k$ under $f$ for all positive $k$).
Since $y_0 \in \Sigma$ and $y_{\alpha} \in \Sigma$,
by Lemma \ref{le:follow} $y_0$ also follows $(\cY \cX)^{\overline{0} \, \overline{\alpha}} \cX^{\infty}$.
Thus by \eqref{eq:XY}, $y_0$ follows $\cX \cY \cX^{\infty}$.
Hence $y_n$ follows $\cY \cX^{\infty}$.

In particular, we have shown that $y_0$ follows $\cX$.
Since the $\cX$-cycle is admissible, $x^{\cX}_0$ also follows $\cX$.
Thus by Lemma \ref{le:lineSegment}, every $x \in \cL \!\left[ x^{\cX}_0, y_0 \right]$ follows $\cX$.
Note that $\cL \!\left[ x^{\cX}_0, y_0 \right]$ does not consist of a single point
because $y_0 \in \Sigma$ and $x^{\cX}_0 \notin \Sigma$ and so $y_0 \ne x^{\cX}_0$.
We also have $y_{-n}, y_{-2 n}, \ldots \in \cL \!\left[ x^{\cX}_0, y_0 \right]$
with $y_{j n} \to x^{\cX}_0$ as $j \to -\infty$.

Since $f_{\cX}$ is affine, $W^u_{\rm br} \!\left( \left\{ x^{\cX}_i \right\} \right)$
is linear in a neighbourhood\removableFootnote{
Note that $W^u \!\left( \left\{ x^{\cX}_i \right\} \right)$ is not necessarily
linear in a neighbourhood if $x_i \in \Sigma$ for some $i$, I think.
}
of the $\cX$-cycle.
Since $y_{j n} \to x^{\cX}_0$ as $j \to -\infty$, there exists a constant $C \in \mathbb{Z}$ such that
\begin{equation}
W^u_{\rm br} \!\left( \left\{ x^{\cX}_i \right\} \right) =
\bigcup_{i = 0,1,\ldots} f^i \!\left( \cL \! \left( x^{\cX}_0, y_{j n} \right) \right),
\label{eq:Wubranch2}
\end{equation}
for all $j \le C$.
Since every $x \in \cL \!\left[ x^{\cX}_0, y_0 \right]$ follows $\cX$,
we can use $j = 1$ in \eqref{eq:Wubranch2}.
Thus, in particular, $\cL[y_0,y_n) \subset W^u_{\rm br} \!\left( \left\{ x^{\cX}_i \right\} \right)$, see Fig.~\ref{fig:InvManSchem}.
To then describe $W^u_{\rm br} \!\left( \left\{ x^{\cX}_i \right\} \right)$ more succinctly,
we observe that $\cL[y_0,y_n)$ is a {\em fundamental domain} in that every
orbit of $f$ in $W^u_{\rm br} \!\left( \left\{ x^{\cX}_i \right\} \right)$ involves exactly one point in $\cL[y_0,y_n)$.
For all $i < 0$\removableFootnote{
We take care to avoid discussing preimages because $f$ may not be invertible.
},
we have
\begin{equation}
f \!\left( \cL[y_{i-1},y_{i-1+n}) \right) = \cL[y_i,y_{i+n}) \;,
\label{eq:fLi}
\end{equation}
because the components of $f$ are affine and each $y_0, y_{-n}, \ldots$ follows $\cX$.
Equation \eqref{eq:fLi} also holds for all $i \ge 0$
because $y_0$ and $y_n$ both follow $\cY \cX^{\infty}$.
Therefore $W^u_{\rm br} \!\left( \left\{ x^{\cX}_i \right\} \right) =
\bigcup_{i \in \mathbb{Z}} \cL[y_i,y_{i+n})$.
The line segments $\cL[y_i,y_{i+n})$ converge to the $\cX$-cycle as $i \to \infty$,
thus the forward orbit of every point in $W^u_{\rm br} \!\left( \left\{ x^{\cX}_i \right\} \right)$
converges to the $\cX$-cycle, and therefore
$W^u_{\rm br} \!\left( \left\{ x^{\cX}_i \right\} \right) \subset W^s \!\left( \left\{ x^{\cX}_i \right\} \right)$.
\end{proof}

\section{Main results}
\label{sec:thms}
\setcounter{equation}{0}

Here we state the main results and then discuss the assumptions contained within them.
The results concern $\cX^k \cY$-cycles where $k \ge 0$.
These are periodic solutions that follow $\cX$ a total of $k$ times, then follow $\cY$, and then repeat.
Theorem \ref{th:infinity} provides sufficient conditions
for the existence of admissible, asymptotically stable $\cX^k \cY$-cycles for infinitely many values of $k \ge 0$.
Theorem \ref{th:homoclinic} indicates that many of the conditions of Theorem \ref{th:infinity} are necessary.
Moreover, Proposition \ref{pr:subsumed} tells us that the conditions of Theorem \ref{th:infinity}
imply the presence of a subsumed homoclinic connection.
The nature of $\cX^k \cY^{\overline{0}}$-cycles is also included in Theorem \ref{th:homoclinic}.
These periodic solutions are unstable;
numerical computations suggest that their stable manifolds typically form the
boundaries between the basins of attraction of the $\cX^k \cY$-cycles.

\begin{theorem}
Suppose
\vspace{-2mm}																					
\begin{enumerate}
\setlength{\itemsep}{0pt}
\item
$\cX$ and $\cY$ satisfy
$\cX \cY = \left( \cY \cX \right)^{\overline{0} \, \overline{\alpha}}$
for some $\alpha = 1,\ldots,n+p-1$\removableFootnote{
This condition implies $\cX_0 \ne \cY_0$.
We do not seem need to assume that ${\rm gcd}(n,p) = 1$
and so it may be possible that this codimension-three scenario
is possible for non-rotational words $\cX$.
};
\label{it:infinityXY}
\item
the eigenvalues of $M_{\cX}$ satisfy \eqref{eq:eigenvalueCondition},
and $e_1^{\sf T} \zeta_1 \ne 0$;
\label{it:infinityEigenvalues}
\item
$\lambda_1 \lambda_2 = 1$ and $\lambda_2 < c < 1$, where
\begin{equation}
c = \det \!\left(
\begin{bmatrix} \omega_1^{\sf T} \\ \omega_2^{\sf T} \end{bmatrix} M_{\cY}
\begin{bmatrix} \zeta_1 & \zeta_2 \end{bmatrix} \right);
\label{eq:c}
\end{equation}
\label{it:infinitylam1lam2c}
\item
the $\cX$-cycle is admissible with no points on $\Sigma$;
\label{it:infinityXcycle}
\item
there exists an admissible $\cS$-orbit, denoted $\{ y_i \}$ and where $\cS$ is defined by \eqref{eq:S},
that is homoclinic to the $\cX$-cycle and
with $y_0 = E^u \!\left( x^{\cX}_0 \right) \cap \Sigma$ and $y_{\alpha} \in \Sigma$;
\label{it:infinitySorbit}
\item
there does not exist $i \ge 0$ such that
$y_i \in \Sigma$ and $y_{i+n} \in \Sigma$.
\label{it:infinityyi}
\end{enumerate}
Then there exists $k_{\rm min} \ge 0$
such that for all $k \ge k_{\rm min}$
the map $f$ has an admissible, asymptotically stable $\cX^k \cY$-cycle
with no points on $\Sigma$.
\label{th:infinity}
\end{theorem}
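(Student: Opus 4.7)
The plan is to work in a coordinate system centred at $x^{\cX}_0$ with axes $\zeta_1,\zeta_2$ and a complementary basis of the generalised eigenspaces of $M_\cX$, in which $\tilde M_\cX := V^{-1}M_\cX V$ is block diagonal with blocks $\lambda_1,\lambda_2,B$ and $\rho(B)\le\beta<\lambda_2$. First I would extract the key algebraic consequence of hypotheses~(i) and (v). Since $y_0,y_\alpha\in\Sigma$ and $\cX\cY=(\cY\cX)^{\overline{0}\,\overline{\alpha}}$, Lemma~\ref{le:follow} allows $y_0$ to follow both $\cY\cX^{\infty}$ and $\cX\cY\cX^{\infty}$, as in the proof of Proposition~\ref{pr:subsumed}. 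Write $y_0=x^{\cX}_0+s_0\zeta_1$ with $s_0\ne 0$ (since $y_0\in\Sigma$ but $x^{\cX}_0\notin\Sigma$), and set $\eta:=f_\cY(x^{\cX}_0)-x^{\cX}_0$ and $\tilde a:=\omega_1^{\sf T}M_\cY\zeta_1$. Forward convergence of these two itineraries to $x^{\cX}_0$ through the stable subspace gives $\omega_1^{\sf T}(y_p-x^{\cX}_0)=\omega_1^{\sf T}\eta+s_0\tilde a=0$ and $\omega_1^{\sf T}(y_{n+p}-x^{\cX}_0)=\omega_1^{\sf T}\eta+s_0\lambda_1\tilde a=0$; since $\lambda_1>1$ and $s_0\ne 0$, subtracting forces $\tilde a=0$ and hence $\omega_1^{\sf T}\eta=0$.

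With this identity in hand I would analyse the eigenvalues of $M_{\cX^k\cY}=M_\cY M_\cX^k$ through the decomposition
\[
\tilde M_{\cX^k\cY} \;=\; \lambda_1^k\, u_1 e_1^{\sf T} + \lambda_2^k\, u_2 e_2^{\sf T} + R_k ,\qquad u_j:=\tilde M_\cY e_j,\quad \|R_k\|=O(\beta^k).
\]
Because $e_1^{\sf T}u_1=\tilde a=0$ the first summand is nilpotent of index two; the first two summands have rank two, and on their image $\mathrm{span}(u_1,u_2)$ they are represented by $\bigl(\begin{smallmatrix}0 & \lambda_1^k\tilde b\\ \lambda_2^k\tilde c & \lambda_2^k\tilde d\end{smallmatrix}\bigr)$, whose determinant equals $-\tilde b\tilde c=c$ (using $\lambda_1\lambda_2=1$) and whose trace $\lambda_2^k\tilde d\to 0$. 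Its two nonzero eigenvalues tend to $\pm i\sqrt c$ of modulus $\sqrt c<1$, and the remaining $N-2$ eigenvalues are perturbations of zero by at most $O(\beta^k)$. Hence for all sufficiently large $k$ every eigenvalue of $M_{\cX^k\cY}$ has modulus strictly less than one; in particular $1$ is not an eigenvalue, so the $\cX^k\cY$-cycle exists and is unique with $z^{(k)}_0=(I-M_{\cX^k\cY})^{-1}P_{\cX^k\cY}b$.

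The remaining task is admissibility: each $z^{(k)}_j$ must lie strictly on the $(\cX^k\cY)_j$-side of $\Sigma$. Expanding the cycle points asymptotically in the three decay scales $\lambda_1^{-k},\lambda_2^k,\beta^k$ shows that $z^{(k)}_{jn+i}\to x^{\cX}_i$ for $j$ bounded away from $0$ and $k$, while anchored at the two ends of the $\cY$ excursion one has $z^{(k)}_{kn+j}\to y_j$ and $z^{(k)}_j\to y_{p+j}$ for $j$ bounded. The limits $x^{\cX}_i$ lie strictly off $\Sigma$ by~(iv), so these cycle points are automatically admissible for large $k$; the delicate indices are those whose limits are $y_0$ or $y_\alpha$. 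For each such index I would compute the leading perturbation of $e_1^{\sf T}z^{(k)}_j$ (a combination of $\lambda_1^{-k}$ and $\lambda_2^k$ terms, with coefficients simplified using $\tilde a=\omega_1^{\sf T}\eta=0$) and show its sign matches $(\cX^k\cY)_j$. Hypothesis~(vi), which forbids $y_i$ and $y_{i+n}$ from simultaneously lying on $\Sigma$, enters precisely here to exclude degenerate cancellation of those leading coefficients. Choosing $k_{\min}$ large enough that the finitely many sign inequalities all hold completes the proof. The main obstacle is this last step: the perturbative book-keeping is intricate because different critical indices draw their leading corrections from different mixtures of the scales $\lambda_1^{-k},\lambda_2^k,\beta^k$, and the role of (vi) must be extracted by a case analysis of exactly which cycle points have limits on $\Sigma$.
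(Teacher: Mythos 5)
Your opening steps match the paper's: the change to eigen-coordinates, the derivation of $\omega_1^{\sf T} M_{\cY}\zeta_1=0$ and $\omega_1^{\sf T}\!\left(f_{\cY}(x^{\cX}_0)-x^{\cX}_0\right)=0$ from the two itineraries $\cY\cX^{\infty}$ and $\cX\cY\cX^{\infty}$ of $y_0$ (the paper's $\gamma_{11}=\psi_1=0$), and the eigenvalue analysis showing two eigenvalues of $M_{\cX^k\cY}$ tend to $\pm{\rm i}\sqrt{c}$ with the rest tending to $0$, are all correct and essentially the paper's Steps 1--3 and 7. The gap is in the admissibility argument, which is the heart of the proof and is both left as a plan and founded on an incorrect asymptotic picture. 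You assert that $z^{(k)}_{kn+j}\to y_j$ and $z^{(k)}_j\to y_{p+j}$. This is false: a direct computation (using $\psi_2=\frac{-a_0(\lambda_1-\lambda_2)\gamma_{21}}{1-\lambda_2}$, which follows from $y_0$ following both $\cX\cY$ and $\cY\cX$) gives
\begin{equation}
h\!\left(x^{\cX^k\cY}_{kn}\right)\;\to\;\frac{a_0(\lambda_1+1)c}{1+c}\,e_1\;,
\nonumber
\end{equation}
and since $c>\lambda_2$ strictly, the factor $\frac{(\lambda_1+1)c}{1+c}$ is strictly greater than $1$ (and less than $\lambda_1$ since $c<1$). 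So the junction points of the cycles do not converge to $y_0$ at all; they converge to a point strictly interior to the open segment $\cL(y_0,y_n)$ of the subsumed branch $W^u_{\rm br}$. The same holds at every index: the cycle accumulates on the open segments $\cL(y_i,y_{i+n})$, not on the points $y_i$.

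This matters for two reasons. First, the hypothesis $c>\lambda_2$ never appears in your sketch, yet it is exactly what forces the limit to be interior to the fundamental-domain segment rather than at or beyond its endpoint; admissibility genuinely fails without it, so any argument that does not invoke it cannot be completed. Second, your proposed use of hypothesis (vi) — ``excluding degenerate cancellation of leading coefficients'' in a sign computation at indices whose limit is $y_0$ or $y_\alpha$ — is aimed at the wrong object. In the paper's proof, (vi) guarantees that no forward image of the open segment $\cL\!\left(y_{-j^*n},y_{(-j^*+1)n}\right)$ crosses $\Sigma$ (a segment can touch $\Sigma$ at one endpoint without its interior meeting $\Sigma$, but not at both), so that this segment can be fattened into an open lens-shaped region $\Omega_{j^*,\kappa}$ whose first $2j^*n+p$ images avoid $\Sigma$; every point of $\Omega_{j^*,\kappa}$ then follows $\cX^{j^*}\cY\cX^{j^*}$, and the interior-limit computation above places $h\big(x^{\cX^k\cY}_{(k-j^*)n}\big)$ inside $\Omega_{j^*,\kappa}$ for large $k$. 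Replacing your pointwise sign analysis at the $y_j$ by this segment-fattening argument (or an equivalent) is the missing step.
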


\begin{theorem}
Suppose
\vspace{-2mm}																					
\begin{enumerate}
\setlength{\itemsep}{0pt}
\item
$\cX$ and $\cY$ satisfy
$\cX \cY = \left( \cY \cX \right)^{\overline{0} \, \overline{\alpha}}$
for some $\alpha = 1,\ldots,n+p-1$;
\label{it:homoclinicXY}
\item
the eigenvalues of $M_{\cX}$ satisfy \eqref{eq:eigenvalueCondition},
and $e_1^{\sf T} \zeta_1 \ne 0$;
\label{it:homoclinicEigenvalues}
\item
$c \notin \{ -1,0 \}$, where $c$ is given by \eqref{eq:c};
\label{it:homoclinicc}
\item
$f_{\cY^{\overline{0}}} \left( E^u \!\left( x^{\cX}_0 \right) \right) \not\subset
E^s \!\left( x^{\cX}_0 \right)$\removableFootnote{
Since $f_{\cY^{\overline{0}}} \left( E^u \!\left( x^{\cX}_0 \right) \right)$
is a line that we know intersects $E^s \!\left( x^{\cX}_0 \right)$
(because $y_0 \in E^u \!\left( x^{\cX}_0 \right)$
and $f_{\cY^{\overline{0}}}(y_0) \in E^s \!\left( x^{\cX}_0 \right)$),
an equivalent condition is
that there exists $x \in E^u \!\left( x^{\cX}_0 \right)$
such that $f_{\cY^{\overline{0}}}(x) \not\in E^s \!\left( x^{\cX}_0 \right)$,
and yet another equivalent condition is
$f_{\cY^{\overline{0}}} \left( x^{\cX}_0 \right) \not\in E^s \!\left( x^{\cX}_0 \right)$.
};
\label{it:homoclinicY0bar}
\item
there exists $k_{\rm min} \ge 0$
such that for all $k \ge k_{\rm min}$
the map $f$ has an admissible, stable $\cX^k \cY$-cycle
with no points on $\Sigma$\removableFootnote{
Why can't we have points on $\Sigma$?
Answer: otherwise we do not know how do derive consequences of stability in Step 1.
}.
\label{it:homoclinicInfinity}
\end{enumerate}
Then the $\cX$-cycle is admissible with $x^{\cX}_0 \notin \Sigma$\removableFootnote{
We cannot prove that the $\cX$-cycle has no points on $\Sigma$.
The point $x^{\cX}_0$ is special because the $\cX_0 \ne \cY_0$.
}
and there exists an $\cS$-orbit (not necessarily admissible),
denoted $\{ y_i \}$ and where $\cS$ is defined by \eqref{eq:S},
that is homoclinic to the $\cX$-cycle and
with $y_0 = E^u \!\left( x^{\cX}_0 \right) \cap \Sigma$
and $y_{\alpha} \in \Sigma$\removableFootnote{
We cannot show that the $\cS$-orbit is admissible.
For a counterexample imagine perturbing the example of Fig.~\ref{fig:qqExdRnonzero9p1}
so that $y_1$ crosses $\Sigma$.

Similarly we cannot show that there does not exist $i \ge 0$ such that
$y_i \in \Sigma$ and $y_{i+n} \in \Sigma$.
}.
Moreover, $\lambda_1 \lambda_2 = 1$ and $\lambda_2 \le c \le 1$.
Also, $\cX^k \cY^{\overline{0}}$-cycles are unique (although not necessarily admissible)
for sufficiently large values of $k$, and $x^{\cX^k \cY^{\overline{0}}}_{k n} \to y_0$ as $k \to \infty$.
\label{th:homoclinic}
\end{theorem}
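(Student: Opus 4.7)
The plan is to extract every conclusion of the theorem from an asymptotic analysis of the $\cX^k \cY$- and $\cX^k \cY^{\overline{0}}$-cycles as $k \to \infty$, using the explicit affine-map fixed point formulae and the eigenbasis of $M_\cX$ from Proposition \ref{pr:subsumed}. Write $d_{ij} := \omega_i^T M_\cY \zeta_j$, $w := f_\cY(x^\cX_0) - x^\cX_0$, and split $\mathbb{R}^N$ into the dominant subspace spanned by $\zeta_1, \zeta_2$ plus the complementary slower directions (with decay rate $\beta$). With respect to this decomposition, the fixed-point equation $(I - M_\cY M_\cX^k) u_k = w$ for $u_k := x^{\cX^k\cY}_0 - x^\cX_0$ projects to a $2 \times 2$ system with the dominant matrix
\begin{equation*}
\tilde M_k = \begin{pmatrix} \lambda_1^k d_{11} & \lambda_2^k d_{12} \\ \lambda_1^k d_{21} & \lambda_2^k d_{22} \end{pmatrix},
\end{equation*}
plus contributions from the complement of size $\beta^k$, which are negligible for large $k$.

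First I derive the eigenvalue restrictions. Stability of the $\cX^k \cY$-cycles for all $k \ge k_{\min}$ bounds the spectral radius of $\tilde M_k$ by $1$ asymptotically: the trace $\lambda_1^k d_{11} + \lambda_2^k d_{22}$ forces $d_{11} = 0$, and the determinant $c(\lambda_1 \lambda_2)^k$ (with $c = -d_{12} d_{21}$ matching \eqref{eq:c}) forces $\lambda_1 \lambda_2 \le 1$ given hypothesis \ref{it:homoclinicc}. To upgrade this inequality, assume $\lambda_1 \lambda_2 < 1$ and solve the projected system for $\omega_1^T(x^{\cX^k\cY}_{kn} - x^\cX_0)$: this component either vanishes (forcing $x^{\cX^k\cY}_{kn} \to x^\cX_0$, whose sign of $e_1^T$ is that of $\cX_0$ rather than the required $\cY_0$) or diverges along $\zeta_1$ with an eventually inconsistent sign. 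As $k$ increments through $k_{\min}, k_{\min}+1, \ldots$, in either case some cycle point must cross $\Sigma$, contradicting hypothesis \ref{it:homoclinicInfinity}. Re-running the projected analysis with $\lambda_1 \lambda_2 = 1$ then forces $w_1 := \omega_1^T w = 0$ (that is, $f_\cY(x^\cX_0) \in E^s(x^\cX_0)$), and the sharper bounds $\lambda_2 \le c \le 1$ follow from $|\mu_\pm|^2 = c$ at leading order combined with the subleading corrections of size $\lambda_2^k$.

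Once $w_1 = 0$, the sequences $x^{\cX^k\cY}_{jn+i}$ (for $j$ in the middle of the $\cX$-runs and $i$ fixed in $\{0,\ldots,n-1\}$) converge to the $\cX$-cycle points, establishing admissibility of the $\cX$-cycle with $x^\cX_0 \notin \Sigma$ (a $\cX$-cycle point on $\Sigma$ would put infinitely many $\cX^k\cY$-cycle points on $\Sigma$, violating hypothesis \ref{it:homoclinicInfinity}). To identify $y_0 = E^u(x^\cX_0) \cap \Sigma$, I rerun the analysis with $\cY^{\overline{0}}$ in place of $\cY$: the rank-one perturbation $M_{\cY^{\overline{0}}} - M_\cY = \sigma \eta e_1^T$ (with $\eta := A_{\cY_{p-1}} \cdots A_{\cY_1} \xi$ and $\sigma = \pm 1$) yields $\tilde d_{11} = \sigma (\omega_1^T \eta)(e_1^T \zeta_1)$ and $\tilde w_1 = w_1 + \sigma(\omega_1^T \eta)(e_1^T x^\cX_0)$. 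Hypothesis \ref{it:homoclinicY0bar} is equivalent to $\tilde d_{11} \ne 0$, so the $\cX^k \cY^{\overline{0}}$-cycles are unique (though generically unstable) for large $k$; the fixed-point equation gives $x^{\cX^k \cY^{\overline{0}}}_{kn} \to x^\cX_0 - (\tilde w_1/\tilde d_{11}) \zeta_1$, and substituting $w_1 = 0$ collapses this precisely to $x^\cX_0 - (e_1^T x^\cX_0 / e_1^T \zeta_1) \zeta_1 = E^u(x^\cX_0) \cap \Sigma = y_0$.

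The homoclinic $\cS$-orbit is then the $\cS$-iteration from $y_0$: backward convergence to $x^\cX_0$ along $E^u(x^\cX_0)$ by contractivity of $f_\cX^{-1}$ on the unstable line, and forward convergence because $f_\cY(y_0) \in E^s(x^\cX_0)$ (from $w_1 = 0$ and $y_0 \in E^u$ via $\omega_1^T M_\cY \zeta_1 = d_{11} = 0$). The constraint $y_\alpha \in \Sigma$ emerges from the same limit procedure applied to a second cycle index: the identity $\cX\cY = (\cY\cX)^{\overline{0}\overline{\alpha}}$ of hypothesis \ref{it:homoclinicXY} combined with Lemma \ref{le:follow} shows that $y_0$ follows $(\cY\cX)^{\overline{0}} = (\cX\cY)^{\overline{\alpha}}$, and matching this to the limit of $x^{\cX^k\cY^{\overline{0}}}_{kn+\alpha}$ (where hypothesis 1 places a second, forced $\Sigma$-crossing in the limiting loop) identifies the crossing as $y_\alpha$. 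The main obstacle throughout is promoting $\lambda_1 \lambda_2 \le 1$ to strict equality $\lambda_1 \lambda_2 = 1$: pure spectral stability yields only $\le$, and equality requires the delicate combination of the no-$\Sigma$-crossing admissibility hypothesis with continuity of cycle positions across a sequence of $k$ values.
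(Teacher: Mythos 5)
Your overall strategy --- pass to the eigenbasis of $M_{\cX}$, extract $\gamma_{11}=0$ from the trace, $\lambda_1\lambda_2\le 1$ from the determinant, and the remaining conclusions from asymptotics of the cycle points --- is the paper's strategy, and your treatment of the $\cX^k\cY^{\overline{0}}$-cycles (the rank-one perturbation $M_{\cY^{\overline{0}}}-M_{\cY}$, the identification $\tilde w_1/\tilde d_{11}=e_1^{\sf T}x^{\cX}_0/e_1^{\sf T}\zeta_1$, and the limit $x^{\cX^k\cY^{\overline{0}}}_{kn}\to y_0$) is essentially the paper's Step 7. However, several intermediate steps have genuine gaps. (i) Your promotion of $\lambda_1\lambda_2\le1$ to equality argues that if $x^{\cX^k\cY}_{kn}\to x^{\cX}_0$ then $e_1^{\sf T}x^{\cX^k\cY}_{kn}$ eventually has the sign dictated by $\cX_0$; this presupposes $x^{\cX}_0\notin\Sigma$, which is itself a conclusion of the theorem and is only established later, so the deduction is circular. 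The paper closes this case differently: if $x^{\cX^k\cY}_{(k-1)n}$ and $x^{\cX^k\cY}_{kn}$ both converge to $x^{\cX}_0$, continuity of $f^{n+p}$ forces their images $x^{\cX^k\cY}_{0}$ and $x^{\cX^k\cY}_{n}$ to converge to a common limit, yet these converge to $h^{-1}(\psi)$ and $h^{-1}(J\psi)$ with $\psi\ne J\psi$ (since $1$ is not an eigenvalue of $J$). That argument simultaneously yields $\psi_2\ne0$, which you never establish but which is needed downstream. (ii) Your claim that an $\cX$-cycle point on $\Sigma$ ``would put infinitely many $\cX^k\cY$-cycle points on $\Sigma$'' is false --- a sequence can converge to $\Sigma$ from one side --- so your proof of $x^{\cX}_0\notin\Sigma$ fails; the paper instead observes that the limits $\frac{\gamma_{12}\psi_2}{(1+c)\lambda_1}e_1$ and $\frac{\gamma_{12}\psi_2}{1+c}e_1$ are distinct nonzero points on the $e_1$-axis that must be weakly separated by $h(\Sigma)$, impossible for a hyperplane through the origin unless it contains the $e_1$-axis, contradicting $e_1^{\sf T}\zeta_1\ne0$. (iii) The bound $c\ge\lambda_2$ cannot come from stability: once $\gamma_{11}=0$ and $\lambda_1\lambda_2=1$, the dominant eigenvalues of $\Gamma J^k$ converge to $\pm{\rm i}\sqrt{c}$ (for $c>0$ the modulus is exactly $\sqrt{c}$, since the product of the conjugate pair equals the $2\times2$ determinant $c$), so spectral considerations give only $|c|\le1$. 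The lower bound is an admissibility statement: $e_1^{\sf T}h\!\left(x^{\cX^k\cY}_{kn}\right)\to\frac{a_0(\lambda_1+1)c}{1+c}$ must lie on the opposite side of $a_0$ (the position of $y_0$ on $E^u\!\left(x^{\cX}_0\right)$) from the origin, forcing $\frac{(\lambda_1+1)c}{1+c}\ge1$, i.e.\ $c\ge\lambda_2$.

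The largest gap is $y_{\alpha}\in\Sigma$. Knowing that $y_0$ follows $(\cY\cX)^{\overline{0}}=(\cX\cY)^{\overline{\alpha}}$ only constrains which closed half-spaces the iterates occupy; it does not place $y_{\alpha}=f_{\tilde{\cX}}(y_0)$ exactly on $\Sigma$, and the $\cX^k\cY^{\overline{0}}$-cycles need not be admissible, so there is no ``forced $\Sigma$-crossing'' available to pass to the limit. The paper's argument here is substantive: assumption (iv), combined with $f_{\cY^{\overline{0}}}(y_0)=f_{\cY}(y_0)\in E^s\!\left(x^{\cX}_0\right)$, makes $y_0$ the \emph{unique} point of $E^u\!\left(x^{\cX}_0\right)$ mapped into $E^s\!\left(x^{\cX}_0\right)$ by $f_{\cY^{\overline{0}}}$; the rank-one identity $f_{\cY^{\overline{0}}\cX}(x)=f_{(\cY\cX)^{\overline{0}\,\overline{\alpha}}}(x)+\xi_{\tilde{\cY}}\,e_1^{\sf T}f_{\tilde{\cX}}(x)$ (with $\tilde{\cX}$ the first $\alpha$ symbols of $\cX\cY$) then shows that $\omega_1^{\sf T}\!\left(f_{\cY^{\overline{0}}\cX}\!\left(x^{\cX}_0+a\zeta_1\right)-x^{\cX}_0\right)$ is a nonzero constant multiple of $e_1^{\sf T}f_{\tilde{\cX}}\!\left(x^{\cX}_0+a\zeta_1\right)$, and since the unique zero of the former occurs at $a=a_0$, we get $e_1^{\sf T}f_{\tilde{\cX}}(y_0)=0$, i.e.\ $y_{\alpha}\in\Sigma$. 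Some version of this argument is indispensable and is absent from your proposal.
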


We conclude this section with some technical remarks about the theorem statements.
Proofs of the theorems are deferred to \S\ref{sec:proofs}.
\begin{itemize}
\item[\raisebox{.6mm}{\tiny $\bullet$}]
The scenario described in Theorem \ref{th:infinity} is codimension-three because
each of the conditions
$y_0 \in W^s \!\left( \left\{ x^{\cX}_i \right\} \right)$,
$y_{\alpha} \in \Sigma$ and
$\lambda_1 \lambda_2 = 1$ is codimension-one.
\item[\raisebox{.6mm}{\tiny $\bullet$}]
Assumptions \eqref{it:infinityXcycle} and \eqref{it:infinityyi} of Theorem \ref{th:infinity}
are included in order to prove admissibility of the $\cX^k \cY$-cycles.
If $x^{\cX}_i \in \Sigma$, for some $i = 1,\ldots,n$,
then it may be possible for points of an $\cX^k \cY$-cycle near $x^{\cX}_i$
to lie on the wrong side of $\Sigma$\removableFootnote{
I think that the $\cX^k \cY$-cycles must be admissible if either
the $\cX$-cycle has no points on $\Sigma$ or $e_1^{\sf T} \zeta_2 \ne 0$
(both conditions occur generically)
but that this would be icky to prove.
Moreover I think that the given condition is necessary 
because I can envisage a codimension-five counterexample involving $e_1^{\sf T} \zeta_2 = 0$.
}.
If $y_i$ and $y_{i+n}$ both lie in $\Sigma$, for some $i \ge 0$,
then our method of proof of admissibility fails because
any fattening of $\cL(y_i,y_{i+n})$ into an open set (containing one point of each $\cX^k \cY$-cycle,
for sufficiently large $k$) will not lie entirely on one side of $\Sigma$\removableFootnote{
I suspect that this condition is necessary
as I can envisage a plausible codimension-five scenario
(or a codimension-four scenario using $i = 0$) in which there exists such an $i$
and $\cX^k \cY$-cycles each involve one point on the wrong side of $\Sigma$.
}.
\item[\raisebox{.6mm}{\tiny $\bullet$}]
As evident in the proof of Theorem \ref{th:infinity}, the assumption
$c > \lambda_2$ ensures that $\cX^k \cY$-cycles are admissible
whereas $c < 1$ ensures that they are asymptotically stable.
\item[\raisebox{.6mm}{\tiny $\bullet$}]
In Theorem \ref{th:homoclinic}, the assumptions $c \ne -1$ and $c \ne 0$
eliminate the possibility of alternate geometric structures
that have codimension greater than three\removableFootnote{
Our method of proof is to study the properties of $g_{\cY}$.
If $c = -1$ or $c = 0$ then we cannot rule out the possibility that
$g_{\cY}$ satisfies some unusual criteria
that do not produce the required properties.
Roughly, if $c = 0$ then the resulting homoclinic connection
may approach $x^{\cX}_0$ in a direction other than $\zeta_2$,
and if $c = -1$ then as $k \to \infty$ the points of the $\cX^k \cY$-cycles
could be given by a $0/0$-type limit.
Yet certainly, each condition $c = -1$ and $c = 0$ is either necessary
or will require completely different mathematical arguments
(that are likely to be very difficult) to show that it is unnecessary.
}.
\item[\raisebox{.6mm}{\tiny $\bullet$}]
Assumption \eqref{it:homoclinicY0bar} of Theorem \ref{th:homoclinic} is
used in our proof to show that $y_{\alpha} \in \Sigma$\removableFootnote{
Although I believe that this condition is unnecessary.
My reasoning is that one can show that
\begin{equation}
\omega_1^{\sf T} \xi_{\tilde{\cY}} e_1^{\sf T} f_{\tilde{\cX}} \left( x^{\cX}_0 + c \zeta_1 \right) =
\lambda_1 \omega_1^{\sf T} \xi_{\cY} e_1^{\sf T} \left( x^{\cX}_0 + c \zeta_1 \right) \;.
\label{eq:Y0barIdentity}
\end{equation}
The given assumption is equivalent to $\omega_1^{\sf T} \xi_{\cY} \ne 0$,
from which \eqref{eq:Y0barIdentity} quickly leads to the conclusion
that $e_1^{\sf T} y_{\alpha} = 0$, where
$y_{\alpha} = f_{\tilde{\cX}} \left( x^{\cX}_0 + c_0 \zeta_1 \right) = 0$.
All quantities in \eqref{eq:Y0barIdentity}
are multivariate polynomial functions of the parameters of $f$,
and the idea is that if $\omega_1^{\sf T} \xi_{\cY} = 0$
then one could change the parameters by an arbitrarily small amount to get
$\omega_1^{\sf T} \xi_{\cY} \ne 0$ and so by continuity
we must always have $e_1^{\sf T} y_{\alpha} = 0$.
One major difficulty is that the argument requires knowing that, given $\cX$ and $\cY$,
$\omega_1^{\sf T} \xi_{\cY} \ne 0$ for some combination of parameter values.
}
and to establish the given properties regarding $\cX^k \cY^{\overline{0}}$-cycles.
\end{itemize}

\section{Examples for the three-dimensional border-collision normal form}
\label{sec:examples}
\setcounter{equation}{0}

For piecewise-smooth maps, interactions between invariant sets and switching manifolds
give rise to so-called discontinuity-induced bifurcations \cite{DiBu08}.
The simplest bifurcation of this type corresponds to the collision of a fixed point with a switching manifold.
If the map is continuous and the derivatives of its components are bounded, at least locally,
then the bifurcation is referred to as a border-collision bifurcation.



Diverse dynamics may be created in border-collision bifurcations, see \cite{Si16} for a recent review.
These dynamics are described by maps of the form \eqref{eq:f}.
If certain non-degeneracy conditions are satisfied,
then the map can be put into the border-collision normal form \cite{Di03}.
In three dimensions, this normal form can be written as
\begin{equation}
x_{i+1} =
\begin{cases}
\begin{bmatrix}
\tau_L & 1 & 0 \\
-\sigma_L & 0 & 1 \\
\delta_L & 0 & 0
\end{bmatrix}
x_i + 
\begin{bmatrix}
1 \\ 0 \\ 0
\end{bmatrix} \mu \;, & e_1^{\sf T} x_i \le 0 \;, \\
\begin{bmatrix}
\tau_R & 1 & 0 \\
-\sigma_R & 0 & 1 \\
\delta_R & 0 & 0
\end{bmatrix} 
x_i + 
\begin{bmatrix}
1 \\ 0 \\ 0
\end{bmatrix} \mu \;, & e_1^{\sf T} x_i \ge 0 \;,
\end{cases}
\label{eq:bcnf}
\end{equation}
where $\tau_L, \sigma_L, \delta_L, \tau_R, \sigma_R, \delta_R \in \mathbb{R}$
and $\mu \in \mathbb{R}$ is a parameter that controls the border-collision bifurcation which occurs at $\mu = 0$.
Since the structure of the dynamics of \eqref{eq:bcnf} is independent of the magnitude of $\mu$,
it is appropriate to set $\mu = 1$ in order to study dynamics created in border-collision bifurcations.
With this restriction, the parameter space of \eqref{eq:bcnf} is six-dimensional.

Given suitable words $\cX$ and $\cY$,
we can search for codimension-three points in the parameter space of \eqref{eq:bcnf} at which the map
has infinitely many asymptotically stable $\cX^k \cY$-cycles.
To do this, we first identify a computationally convenient set of three codimension-one
conditions that describe the scenario of Theorems \ref{th:infinity} and \ref{th:homoclinic}.

With a subsumed homoclinic connection, $f_{\cY}$ maps $E^u \!\left( x^{\cX}_0 \right)$
onto $E^s \!\left( x^{\cX}_0 \right)$, see Fig.~\ref{fig:InvManSchem}.
To describe this restriction algebraically,
we consider the scalar quantity $a = \omega_1^{\sf T} \!\left( x - x^{\cX}_0 \right)$
which is a measure of the displacement of $x \in \mathbb{R}^N$ from $E^s \!\left( x^{\cX}_0 \right)$.
Since $f_{\cY}$ is affine, 
this displacement changes affinely under $f{\cY}$.
That is, $\omega_1^{\sf T} \!\left( f_{\cY}(x) - x^{\cX}_0 \right) = \gamma_{11} a + \psi_1$,
for some constants $\gamma_{11}, \psi_1 \in \mathbb{R}$
(using the notation of \S\ref{sec:proofs}).
Thus with a subsumed homoclinic connection we must have $\gamma_{11} = \psi_1 = 0$.

The scenario of Theorems \ref{th:infinity} and \ref{th:homoclinic}
also requires $\lambda_1 \lambda_2 = 1$.
The three examples given below were obtained by fixing the values of
$\sigma_L$, $\sigma_R$ and $\delta_R$ and solving
\begin{equation}
\gamma_{11} = 0 \;, \qquad
\psi_1 = 0 \;, \qquad
\lambda_1 \lambda_2 = 1 \;,
\label{eq:threeConditions}
\end{equation}
for $\tau_L$, $\tau_R$ and $\delta_L$.
This is an effective approach because each of the constants in \eqref{eq:threeConditions}
can be evaluated numerically in a relatively efficient manner at
any point in the parameter space of \eqref{eq:bcnf}.
Once such a point is found,
its validity can be verified by checking each of the conditions in Theorems \ref{th:infinity} and \ref{th:homoclinic}.

First we consider
\begin{equation}
\cX = RLLR \;, \qquad
\cY = LLR \;.
\label{eq:XY9p1}
\end{equation}
For these words, infinitely many asymptotically stable $\cX^k \cY$-cycles were identified in \cite{Si14}
for the two-dimensional border-collision normal form.
Using the approach described above,
for the three-dimensional system \eqref{eq:bcnf} we obtained the values
\begin{equation}
\begin{gathered}
\tau_L = 1.1770635074 \;, \qquad
\sigma_L = 1 \;, \qquad
\delta_L = 0.4334058651 \;, \\
\tau_R = -1.0170722063 \;, \qquad
\sigma_R = 0.5 \;, \qquad
\delta_R = 1 \;,
\end{gathered}
\label{eq:param9p1}
\end{equation}
given to ten decimal places.

\begin{figure}[b!]
\begin{center}
\setlength{\unitlength}{1cm}
\begin{picture}(16,8)
\put(0,0){\includegraphics[width=16cm]{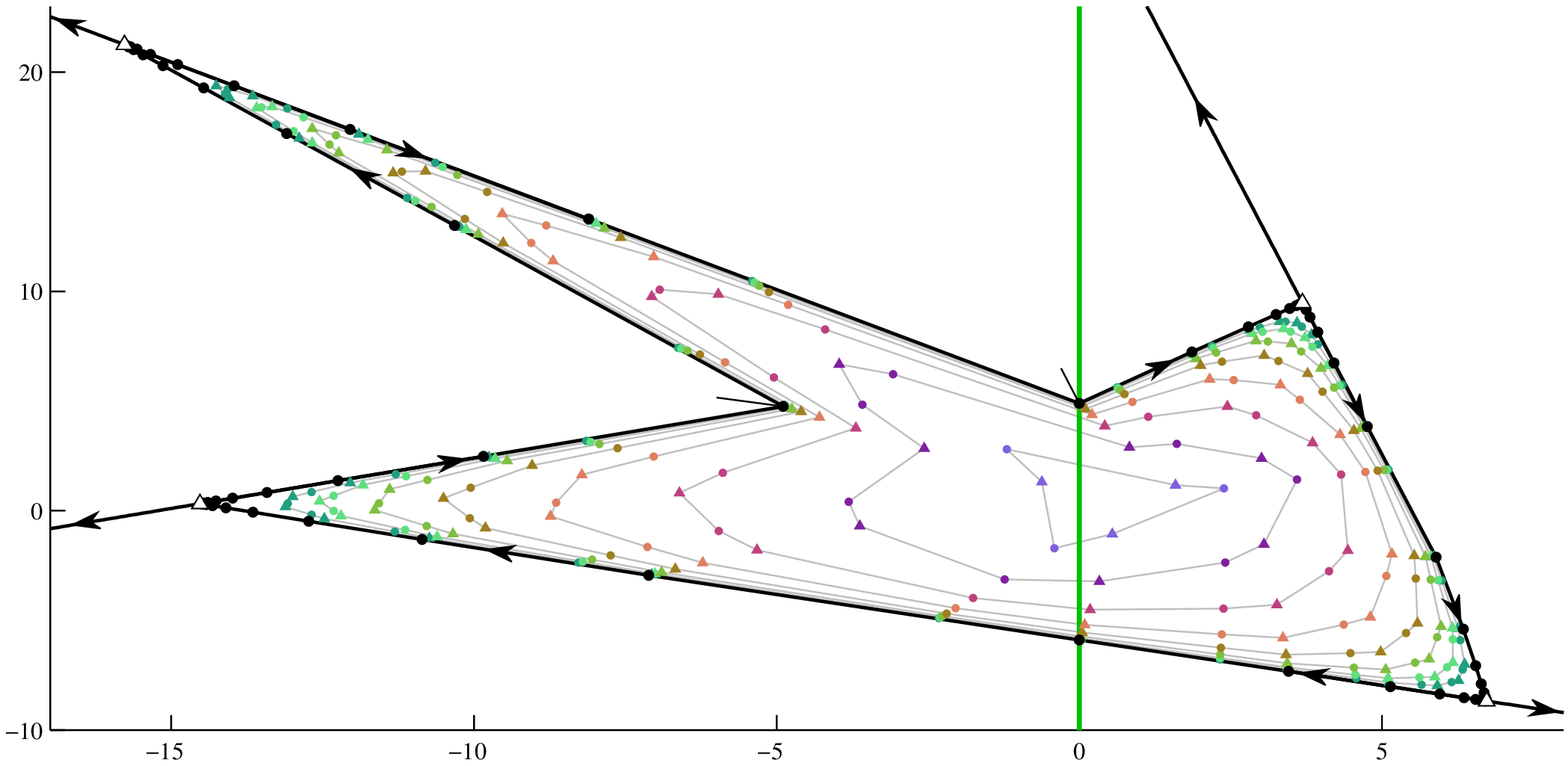}}
\put(8,0){\small $e_1^{\sf T} x$}
\put(0,4.2){\small $e_2^{\sf T} x$}
\put(15.26,1.28){\footnotesize $x^{\cX}_0$}
\put(2.37,2.69){\footnotesize $x^{\cX}_1$}
\put(1.89,7.72){\footnotesize $x^{\cX}_2$}
\put(13.48,5.1){\footnotesize $x^{\cX}_3$}
\put(10.87,6.5){\small $\Sigma$}
\put(13.03,1.3){\scriptsize $y_{\hspace{.8mm}4}$}
\put(13.18,1.278){\tiny -}
\put(5.2,3.73){\scriptsize $y_{\hspace{.8mm}3}$}
\put(5.35,3.708){\tiny -}
\put(6.25,6.06){\scriptsize $y_{\hspace{.8mm}2}$}
\put(6.4,6.038){\tiny -}
\put(14.09,3.89){\scriptsize $y_{\hspace{.8mm}1}$}
\put(14.24,3.868){\tiny -}
\put(10.88,1.62){\scriptsize $y_0$}
\put(7.34,4.14){\scriptsize $y_1$}
\put(10.82,4.54){\scriptsize $y_2$}
\put(14.77,2.63){\scriptsize $y_3$}
\put(6.85,2.22){\scriptsize $y_4$}
\end{picture}
\caption{
A phase portrait of \eqref{eq:bcnf} with \eqref{eq:param9p1}.
We show the $\cX$-cycle and the homoclinic $\cS$-orbit $\{ y_i \}$,
where $\cX$ and $\cY$ are given by \eqref{eq:XY9p1}.
We use circles to show asymptotically stable $\cX^k \cY$-cycles for $k = 0,\ldots,7$,
and triangles to show unstable $\cX^k \cY^{\overline{0}}$-cycles for the same values of $k$.
\label{fig:qqExdRnonzero9p1}
}
\end{center}
\end{figure}

Fig.~\ref{fig:qqExdRnonzero9p1} shows the first two coordinates of the phase space
of \eqref{eq:bcnf} with \eqref{eq:param9p1}.
One branch of $W^u \!\left( \left\{ x^{\cX}_i \right\} \right)$
appears to converge to the $\cX$-cycle, thus forming a subsumed homoclinic connection.
Strictly speaking the homoclinic connection is not (quite) subsumed
because the parameter values are only accurate to ten decimal places.
The part of $W^u \!\left( \left\{ x^{\cX}_i \right\} \right)$ that emanates from $x^{\cX}_0$ converges to $x^{\cX}_1$.
This is because $d = 1$, where $d$ is given by \eqref{eq:d} with $n = 4$ and $p = 3$.
Also the $\cS$-orbit $\{ y_i \}$ has $y_2 \in \Sigma$ because $\alpha = 2$.

The $\cX^k \cY$-cycles are admissible and asymptotically stable for at least $k = 0,1,\ldots,7$,
as shown in Fig.~\ref{fig:qqExdRnonzero9p1}.
The corresponding $\cX^k \cY^{\overline{0}}$-cycles are also shown
and approach the $\cS$-orbit with increasing values of $k$, as predicted by Theorem \ref{th:homoclinic}.
In order to distinguish the different periodic solutions clearly,
we have connected the corresponding pairs of $\cX^k \cY$ and $\cX^k \cY^{\overline{0}}$-cycles with line segments
(these line segments do not relate to the dynamics of \eqref{eq:bcnf}).

\begin{figure}[t!]
\begin{center}
\setlength{\unitlength}{1cm}
\begin{picture}(8,6)
\put(0,0){\includegraphics[height=6cm]{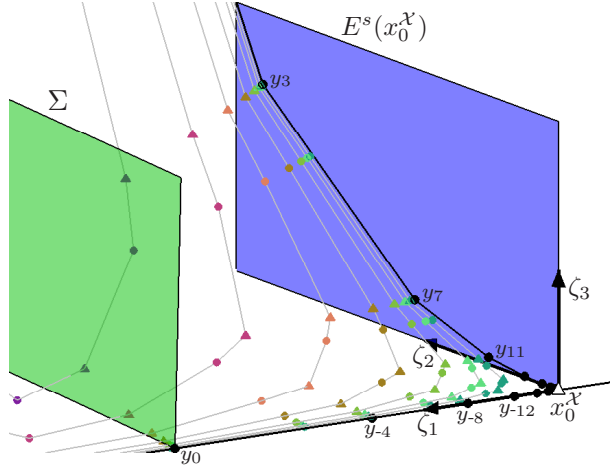}}
\put(7.18,.51){\footnotesize $x^{\cX}_0$}
\put(5.41,.29){\footnotesize $\zeta_1$}
\put(5.35,1.2){\footnotesize $\zeta_2$}
\put(7.42,2.1){\footnotesize $\zeta_3$}
\put(.5,4.57){\small $\Sigma$}
\put(4.4,5.57){\footnotesize $E^s(x^{\cX}_0)$}
\put(6.5,.53){\scriptsize $y_{\hspace{.8mm}12}$}
\put(6.65,.518){\tiny -}
\put(5.94,.44){\scriptsize $y_{\hspace{.8mm}8}$}
\put(6.09,.418){\tiny -}
\put(4.71,.25){\scriptsize $y_{\hspace{.8mm}4}$}
\put(4.86,.228){\tiny -}
\put(2.27,-.09){\scriptsize $y_0$}
\put(3.46,4.92){\scriptsize $y_3$}
\put(5.48,2.07){\scriptsize $y_7$}
\put(6.45,1.32){\scriptsize $y_{11}$}
\end{picture}
\caption{
A three-dimensional view of the phase portrait of \eqref{eq:bcnf} with \eqref{eq:param9p1}
(shown in Fig.~\ref{fig:qqExdRnonzero9p1})
using eigenvectors of $M_{\cX}$ ($\zeta_1$, $\zeta_2$ and $\zeta_3$) as coordinate axes.
\label{fig:qqExdRnonzero9p1_3d}
}
\end{center}
\end{figure}

Fig.~\ref{fig:qqExdRnonzero9p1_3d} provides a three-dimensional view of phase space about $x^{\cX}_0$
for this example.
Here we can observe several expected features of $W^u \!\left( \left\{ x^{\cX}_i \right\} \right)$.
The part of this manifold that converges to $x^{\cX}_0$ (with forward iterations)
is comprised of a union of line segments (connecting $y_3$, $y_7$, $y_{11}$, etc).
It lies within $W^s \!\left( \left\{ x^{\cX}_i \right\} \right)$
and approaches $x^{\cX}_0$ in the slow $\zeta_2$ direction.

Secondly we consider
\begin{equation}
\cX = RLR \;, \qquad
\cY = LL \;,
\label{eq:XY27}
\end{equation}
in order to show that $\cY$ need not consist of the last few symbols of $\cX$.
For \eqref{eq:XY27} we obtained the values
\begin{equation}
\begin{gathered}
\tau_L = -1.9465556255 \;, \qquad
\sigma_L = -1 \;, \qquad
\delta_L = 0.3387541740 \;, \\
\tau_R = -0.3249411658 \;, \qquad
\sigma_R = 1 \;, \qquad
\delta_R = 0.9 \;,
\end{gathered}
\label{eq:param27}
\end{equation}
and with these values a phase portrait of \eqref{eq:bcnf} is shown in Fig.~\ref{fig:qqExdRnonzero27}.
The words \eqref{eq:XY27} satisfy \eqref{eq:XY} with $\alpha = 4$, and so $y_4 \in \Sigma$.
Also $d = 1$, so part of the branch of $W^u \!\left( \left\{ x^{\cX}_i \right\} \right)$
that emanates from $x^{\cX}_0$ converges to $x^{\cX}_1$.
The other branch of $W^u \!\left( \left\{ x^{\cX}_i \right\} \right)$
converges to an asymptotically stable $LLR$-cycle.
Asymptotically stable $\cX^k \cY$-cycles and unstable $\cX^k \cY^{\overline{0}}$-cycles
are shown for $k = 2,\ldots,9$
(for $k = 0$ and $k = 1$ they are not admissible).

\begin{figure}[t!]
\begin{center}
\setlength{\unitlength}{1cm}
\begin{picture}(16,8)
\put(0,0){\includegraphics[width=16cm]{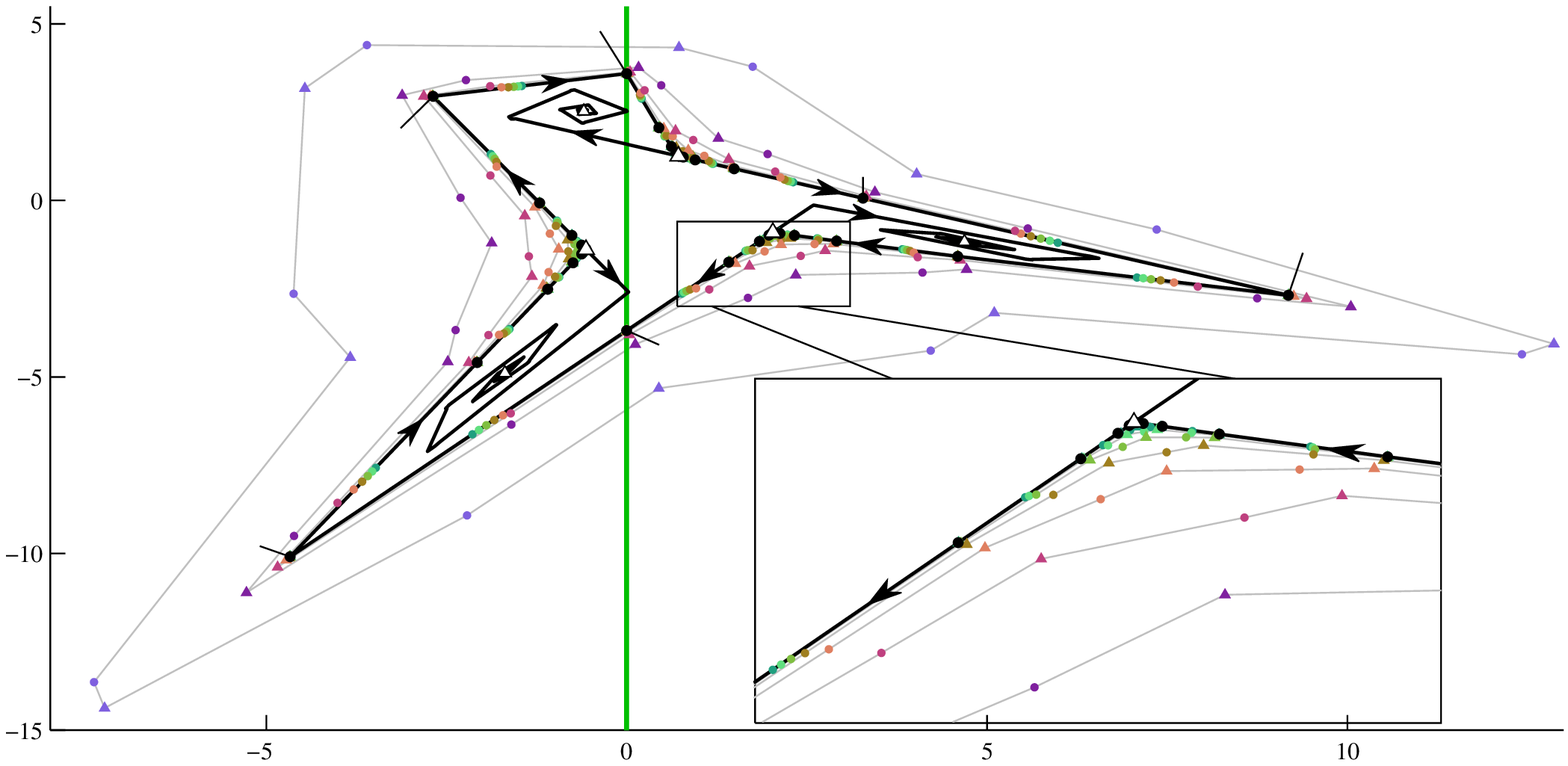}}
\put(8,0){\small $e_1^{\sf T} x$}
\put(0,4.27){\small $e_2^{\sf T} x$}
\put(11.27,3.98){\footnotesize $x^{\cX}_0$}
\put(6.3,5.76){\footnotesize $x^{\cX}_1$}
\put(6.97,6.1){\footnotesize $x^{\cX}_2$}
\put(6.46,2){\small $\Sigma$}
\put(9.73,2.89){\scriptsize $y_{\hspace{.8mm}3}$}
\put(9.88,2.868){\tiny -}
\put(5.98,6.13){\scriptsize $y_{\hspace{.8mm}2}$}
\put(6.13,6.108){\tiny -}
\put(8.93,6.4){\scriptsize $y_{\hspace{.8mm}1}$}
\put(9.08,6.378){\tiny -}
\put(7.11,4.59){\scriptsize $y_0$}
\put(4.4,6.64){\scriptsize $y_1$}
\put(13.3,5.67){\scriptsize $y_2$}
\put(2.87,2.7){\scriptsize $y_3$}
\put(6.35,7.83){\scriptsize $y_4$}
\end{picture}
\caption{
A phase portrait of \eqref{eq:bcnf} with \eqref{eq:param27}
using the same conventions as Fig.~\ref{fig:qqExdRnonzero9p1}.
Here $\cX$ and $\cY$ are given by \eqref{eq:XY27}
and the $\cX^k \cY$ and $\cX^k \cY^{\overline{0}}$-cycles are shown for $k = 2,\ldots,9$.
\label{fig:qqExdRnonzero27}
}
\end{center}
\end{figure}

Finally we consider
\begin{equation}
\cX = RLRLRRLR \;, \qquad
\cY = LRRLR \;.
\label{eq:XY24}
\end{equation}
This example demonstrates that the scenario of Theorems \ref{th:infinity} and \ref{th:homoclinic}
is not restricted to only the simplest choices of $\cX$ and $\cY$.
For these words we obtained
\begin{equation}
\begin{gathered}
\tau_L = -0.5298581051 \;, \qquad
\sigma_L = 0.5 \;, \qquad
\delta_L = -0.2220122186 \;, \\
\tau_R = -3.4893057804 \;, \qquad
\sigma_R = 1.6 \;, \qquad
\delta_R = 0.6 \;,
\end{gathered}
\label{eq:param24}
\end{equation}
with which \eqref{eq:bcnf} is not invertible
because $\delta_L \delta_R < 0$.
Here $n = 8$ and $p = 5$ and so by \eqref{eq:d} we have $d = 3$.
Also $\alpha = 1$.
Fig.~\ref{fig:qqExdRnonzero24} shows a phase portrait of this example,
including asymptotically stable $\cX^k \cY$-cycles
and unstable $\cX^k \cY^{\overline{0}}$-cycles for $k = 0,\ldots,7$.

\begin{figure}[t!]
\begin{center}
\setlength{\unitlength}{1cm}
\begin{picture}(16,8)
\put(0,0){\includegraphics[width=16cm]{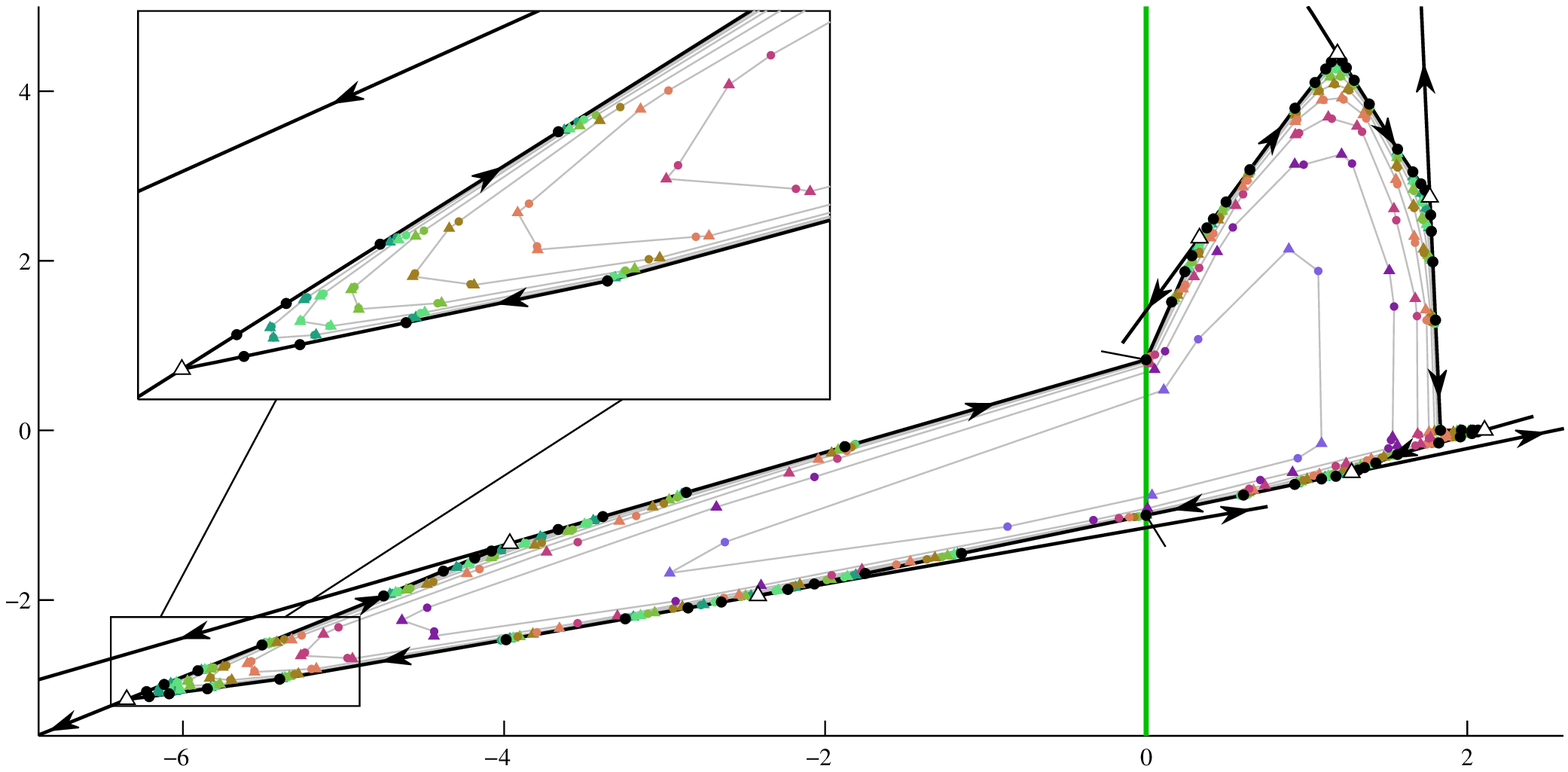}}
\put(8,0){\small $e_1^{\sf T} x$}
\put(0,4.55){\small $e_2^{\sf T} x$}
\put(13.7,3.06){\footnotesize $x^{\cX}_0$}
\put(5.53,2.96){\footnotesize $x^{\cX}_1$}
\put(14.74,6.15){\footnotesize $x^{\cX}_2$}
\put(7.86,1.83){\footnotesize $x^{\cX}_3$}
\put(11.98,5.83){\footnotesize $x^{\cX}_4$}
\put(15.03,4.06){\footnotesize $x^{\cX}_5$}
\put(2.13,4.6){\footnotesize $x^{\cX}_6$}
\put(13.8,7.58){\footnotesize $x^{\cX}_7$}
\put(12,2.58){\scriptsize $y_0$}
\put(11.15,4.61){\scriptsize $y_1$}
\end{picture}
\caption{
A phase portrait of \eqref{eq:bcnf} with \eqref{eq:param24}
using the same conventions as Fig.~\ref{fig:qqExdRnonzero9p1}.
Here $\cX$ and $\cY$ are given by \eqref{eq:XY24}
and the $\cX^k \cY$ and $\cX^k \cY^{\overline{0}}$-cycles are shown for $k = 0,\ldots,7$.
\label{fig:qqExdRnonzero24}
}
\end{center}
\end{figure}

\section{Proofs of the main results}
\label{sec:proofs}
\setcounter{equation}{0}

In order to prove Theorems \ref{th:infinity} and \ref{th:homoclinic},
we first introduce a coordinate system centred about $x^{\cX}_0$ and
aligned with the eigenvectors $\zeta_1$ and $\zeta_2$ and describe some basic properties of the map $g$ in these coordinates.
Here it is only necessary to assume that $\cX$ and $\cY$ are words of length $n$ and $p$ 
and that the eigenvalues of $M_{\cX}$ satisfy \eqref{eq:eigenvalueCondition}.

Let $J$ be a matrix similar to $M_{\cX}$ and with the block diagonal form
\begin{equation}
J = \begin{bmatrix}
\lambda_1 \\
& \lambda_2 \\
& & \tilde{J}
\end{bmatrix},
\label{eq:J}
\end{equation}
where $\tilde{J}$ is an $(N-2) \times (N-2)$ matrix.
We could take $J$ to be a real Jordan form of $M_{\cX}$, but need not be so precise as for our purposes
we only require that large powers of $J$ can be written asymptotically as
\begin{equation}
J^k = \begin{bmatrix}
\lambda_1^k \\
& \lambda_2^k \\
& & \cO \!\left( \beta^k \right)
\end{bmatrix},
\label{eq:Jk}
\end{equation}
where $\cO$ represents big-O notation.
There exists a real-valued non-singular matrix $Q$ such that
\begin{equation}
J = Q^{-1} M_{\cX} Q \;,
\nonumber
\end{equation}
and
\begin{equation}
\zeta_1 = Q e_1 \;, \qquad
\zeta_2 = Q e_2 \;.
\label{eq:zeta12}
\end{equation}
In view of \eqref{eq:eigenvectorNormalisation},
\begin{equation}
\omega_1^{\sf T} = e_1^{\sf T} Q^{-1} \;, \qquad
\omega_2^{\sf T} = e_2^{\sf T} Q^{-1} \;.
\label{eq:omega12}
\end{equation}
We then let
\begin{equation}
u = h(x) = Q^{-1} \!\left( x - x^{\cX}_0 \right).
\label{eq:h}
\nonumber
\end{equation}
These ``$u$-coordinates'' are centred about $x^{\cX}_0$ and partially align
with the invariant subspaces of $x^{\cX}_0$ for the map $f_{\cX}$.
In particular, $x \in E^u \!\left( x^{\cX}_0 \right)$
if and only if $h(x) = a e_1$ for some $a \in \mathbb{R}$,
and $x \in E^s \!\left( x^{\cX}_0 \right)$
if and only if $e_1^{\sf T} h(x) = 0$.

We let $g = h \circ f \circ h^{-1}$
and $g_{\cZ} = h \circ f_{\cZ} \circ h^{-1}$
denote the transformations of $f$ and $f_{\cZ}$ to $u$-coordinates, for any word $\cZ$.
In particular,
\begin{equation}
g_{\cX}(u) = J u \;,
\label{eq:gX}
\end{equation}
and
\begin{equation}
g_{\cY}(u) = \Gamma u + \psi \;,
\label{eq:gY}
\end{equation}
where
\begin{equation}
\Gamma = Q^{-1} M_{\cY} Q \;, \qquad
\psi = Q^{-1} \left( P_{\cY} b - (I-M_{\cY}) x^{\cX}_0 \right).
\nonumber
\end{equation}
For each $i,j = 1,\ldots,N$, we let
$\gamma_{ij}$ denote the $(i,j)$-element of $\Gamma$ and $\psi_i = e_i^{\sf T} \psi$.
Then by \eqref{eq:zeta12} and \eqref{eq:omega12} the quantity \eqref{eq:c} is given by
\begin{equation}
c = \det \!\left(
\begin{bmatrix} e_1^{\sf T} \\ e_2^{\sf T} \end{bmatrix} \Gamma
\begin{bmatrix} e_1 & e_2 \end{bmatrix} \right) =
\gamma_{11} \gamma_{22} - \gamma_{12} \gamma_{21} \;.
\label{eq:c2}
\end{equation}
Given any $u \in \mathbb{R}^N$ and any word $\cZ$,
we say that $u$ follows $\cZ$ under $g$
if $h^{-1}(u)$ follows $\cZ$ under $f$.

We first provide the following algebraic result that is used below
in the proofs of both Theorem \ref{th:infinity} and Theorem \ref{th:homoclinic}.
This result is useful because in both proofs
$y_0 = h^{-1}(a_0 e_1)$ follows both $\cX \cY$ and $\cY \cX$ under $f$.

\begin{lemma}
If $a_0 \in \mathbb{R}$ is such that the point $a_0 e_1$ follows both $\cX \cY$ and $\cY \cX$ under $g$, then
\begin{equation}
\psi_2 = \frac{-a_0 (\lambda_1 - \lambda_2) \gamma_{21}}{1 - \lambda_2} \;.
\label{eq:psi2}
\end{equation}
\label{le:psi2}
\end{lemma}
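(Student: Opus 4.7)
The plan is to exploit the fact that a point which follows \emph{two} different length-$(n+p)$ itineraries under $g$ must yield the same image no matter which composition we use. Concretely, if $a_0 e_1$ follows $\cX\cY$ under $g$, then $g^{n+p}(a_0 e_1) = g_\cY \circ g_\cX(a_0 e_1)$, and if it also follows $\cY\cX$, then $g^{n+p}(a_0 e_1) = g_\cX \circ g_\cY(a_0 e_1)$. Equating these two expressions gives a single affine identity in $\mathbb{R}^N$ which, after being projected onto the $e_2$-axis, will yield \eqref{eq:psi2}.

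First I would compute each side using \eqref{eq:gX} and \eqref{eq:gY}. Since $J e_1 = \lambda_1 e_1$, we get
\begin{equation}
g_\cY(g_\cX(a_0 e_1)) = g_\cY(\lambda_1 a_0 e_1) = \lambda_1 a_0 \Gamma e_1 + \psi,
\nonumber
\end{equation}
while
\begin{equation}
g_\cX(g_\cY(a_0 e_1)) = J(a_0 \Gamma e_1 + \psi) = a_0 J \Gamma e_1 + J \psi.
\nonumber
\end{equation}
Setting these equal produces the identity $a_0(\lambda_1 I - J)\Gamma e_1 = (J - I)\psi$.

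Next I would hit this identity with $e_2^{\sf T}$ on the left. The block-diagonal form \eqref{eq:J} gives $e_2^{\sf T} J = \lambda_2 e_2^{\sf T}$, so $e_2^{\sf T}(\lambda_1 I - J)\Gamma e_1 = (\lambda_1 - \lambda_2) \gamma_{21}$ and $e_2^{\sf T}(J - I)\psi = (\lambda_2 - 1)\psi_2$. Solving the resulting scalar equation $a_0(\lambda_1-\lambda_2)\gamma_{21} = (\lambda_2-1)\psi_2$ for $\psi_2$ yields \eqref{eq:psi2}, using that $\lambda_2 \ne 1$ from \eqref{eq:eigenvalueCondition}.

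The only conceptual point to verify carefully is the justification that $g_\cY \circ g_\cX(a_0 e_1) = g_\cX \circ g_\cY(a_0 e_1)$; this follows because whenever $u_0$ follows a word $\cZ$ of length $m$ under $g$, the algebraic composition $g_\cZ(u_0)$ coincides with the actual $m$-th iterate $g^m(u_0)$, so if $u_0$ follows both $\cX\cY$ and $\cY\cX$ then both compositions compute the same iterate $g^{n+p}(u_0)$. No obstacle beyond this observation is expected; the remainder is a one-line linear-algebra calculation made especially clean by the block structure of $J$ and by the fact that we project onto the slow stable direction $e_2$ where $J$ acts as multiplication by $\lambda_2$.
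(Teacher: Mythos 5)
Your proposal is correct and follows essentially the same route as the paper's proof: both equate $g_{\cY}\circ g_{\cX}(a_0 e_1)$ with $g_{\cX}\circ g_{\cY}(a_0 e_1)$ and extract the second component, your version merely phrasing the final step as left-multiplication by $e_2^{\sf T}$ rather than directly matching second components.
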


\begin{proof}
Since $a_0 e_1$ follows $\cX \cY$ under $g$, we have
$g^{n+p}(a_0 e_1) = g_{\cY}(g_{\cX}(a_0 e_1))$.
Thus by \eqref{eq:gX} and \eqref{eq:gY},
\begin{equation}
g^{n+p}(a_0 e_1) = a_0 \Gamma J e_1 + \psi \;.
\label{eq:ynpp1}
\end{equation}
Similarly, since $a_0 e_1$ follows $\cY \cX$ under $g$, we have
$g^{n+p}(a_0 e_1) = g_{\cX}(g_{\cY}(a_0 e_1))$ and thus
\begin{equation}
g^{n+p}(a_0 e_1) = a_0 J \Gamma e_1 + J \psi \;.
\label{eq:ynpp2}
\end{equation}
By matching the second components of \eqref{eq:ynpp1} and \eqref{eq:ynpp2} we obtain
\begin{equation}
a_0 \lambda_2 \gamma_{21} + \lambda_2 \psi_2 = a_0 \lambda_1 \gamma_{21} + \psi_2 \;,
\nonumber
\end{equation}
where we have used \eqref{eq:J}.
By solving this equation for $\psi_2$ we obtain \eqref{eq:psi2}.
\end{proof}

\subsection{Proof of Theorem \ref{th:infinity}}
\label{sub:proof1}


\myStep{1}
Here we establish some important properties of the homoclinic $\cS$-orbit $\{ y_i \}$
and describe some of its points in $u$-coordinates.

By assumption, $y_0$ follows $\cY \cX^{\infty}$ under $f$.
As noted in the proof of Proposition \ref{pr:subsumed},
since $\cX \cY = \left( \cY \cX \right)^{\overline{0} \, \overline{\alpha}}$
and $y_0, y_{\alpha} \in \Sigma$,
by Lemma \ref{le:follow} $y_0$ also follows $\cX \cY \cX^{\infty}$ under $f$.
Thus for all $j \le 0$,
the point $y_{j n}$ follows $\cX$ under $f$.

Since $y_0 \in E^u \!\left( x^{\cX}_0 \right)$ by definition,
and $y_{j n} \to x^{\cX}_0$ as $j \to -\infty$\removableFootnote{
In order to argue that each $y_{j n} \in E^u \!\left( x^{\cX}_0 \right)$
it is not sufficient to note that $y_0 \in E^u \!\left( x^{\cX}_0 \right)$
and that each $y_{j n}$ follows $\cX$ because $f$ may not be invertible.
We need use the fact that $y_{j n} \to x^{\cX}_0$ as $j \to -\infty$.
},
we conclude that $y_{j n} \in E^u \!\left( x^{\cX}_0 \right)$ for all $j \le 1$.
In addition, we can write
\begin{equation}
h(y_{j n}) = a_j e_1 \;, \quad {\rm for~all~} j \le 1 \;,
\label{eq:hyjn}
\end{equation}
where
\begin{equation}
a_j = a_0 \lambda_1^j \;,
\label{eq:aj}
\end{equation}
and $a_0 \ne 0$ because $y_0 \in \Sigma$ and $x^{\cX}_0 \notin \Sigma$ and so $y_0 \ne x^{\cX}_0$\removableFootnote{
We could have $a_0 < 0$, but I'm pretty sure that throughout both proofs this is not a problem.
}.

For each $i = 0,\ldots,n-1$ and all $j \le -1$, we have
$y_{j n + i} \in \cL \!\left( x^{\cX}_i, y_i \right)$.
Since $y_0$ follows $\cX$ under $f$,
each $y_i$ (for $i = 0,\ldots,n-1$) follows $\cX_i$ under $f$.
Thus since the $\cS$-orbit and the $\cX$-cycle are admissible,
each $y_i$ either lies on $\Sigma$ or on the same side of $\Sigma$ as $x^{\cX}_i$.
Since each $x^{\cX}_i \notin \Sigma$ by assumption \eqref{it:infinityXcycle},
we have $\cL \!\left( x^{\cX}_i, y_i \right) \cap \Sigma = \varnothing$ and thus
\begin{equation}
y_{j n + i} \notin \Sigma \;, \quad {\rm for~each~}
i = 0,\ldots,n-1 {\rm ~and~all~} j \le -1 \;.
\label{eq:yjnpi}
\end{equation}

\myStep{2}
Here we show that every $x \in E^u \!\left( x^{\cX}_0 \right)$ maps to
$E^s \!\left( x^{\cX}_0 \right)$ under $f_{\cY}$.
In terms of $g_{\cY}$, this equates to $\gamma_{11} = \psi_1 = 0$.

Since $y_0$ follows $\cY \cX^{\infty}$ under $f$ (by definition)
and $\{ y_i \}$ converges to the $\cX$-cycle as $i \to \infty$,
we must have $f_{\cY}(y_0) \in E^s \!\left( x^{\cX}_0 \right)$.
In Step 1 we showed that $y_0$ also follows $\cX \cY \cX^{\infty}$ under $f$,
thus $y_n$ follows $\cY \cX^{\infty}$ and so $f_{\cY}(y_n) \in E^s \!\left( x^{\cX}_0 \right)$.

In terms of $g_{\cY}$ we have
$e_1^{\sf T} g_{\cY}(a_0 e_1) = 0$ and $e_1^{\sf T} g_{\cY}(a_1 e_1) = 0$.
Since $a_0 \ne a_1$ and $e_1^{\sf T} g_{\cY}(a e_1) = \gamma_{11} a + \psi_1$, where $a \in \mathbb{R}$,
we must have
\begin{equation}
\gamma_{11} = 0 \;, \qquad
\psi_1 = 0 \;.
\label{eq:gamma11psi1}
\end{equation}
That is, every $x \in E^u \!\left( x^{\cX}_0 \right)$ maps to $E^s \!\left( x^{\cX}_0 \right)$ under $f_{\cY}$.
Also
\begin{equation}
c = -\gamma_{12} \gamma_{21} \;,
\label{eq:c3}
\end{equation}
by \eqref{eq:c2}.

\myStep{3}
Next we use the previous result to show that
$\det \!\left( I - M_{\cX^k \cY} \right) \ne 0$
and thus $\cX^k \cY$-cycles exist and are unique, for sufficiently large values of $k$.

By \eqref{eq:MX}, $M_{\cX^k \cY} = M_{\cY} M_{\cX}^k$.
Also $\Gamma J^k = Q^{-1} M_{\cY} M_{\cX}^k Q$,
thus $\det \!\left( I - M_{\cX^k \cY} \right) = \det \!\left( I - \Gamma J^k \right)$.
In view of \eqref{eq:gamma11psi1} and the assumption $\lambda_1 \lambda_2 = 1$,
\begin{equation}
\Gamma J^k = \left[ \begin{array}{@{}c|c@{}}
\begin{array}{@{}cc@{}}
0 & \frac{\gamma_{12}}{\lambda_1^k} \\
\gamma_{21} \lambda_1^k & \frac{\gamma_{22}}{\lambda_1^k} \\
\vdots & \vdots \\
\gamma_{N1} \lambda_1^k & \frac{\gamma_{N2}}{\lambda_1^k}
\end{array} & \cO \!\left( \beta^k \right)
\end{array} \right],
\label{eq:GammaJk}
\end{equation}
and thus
\begin{equation}
\det \!\left( I - \Gamma J^k \right) = 1 + c -
\frac{\gamma_{22}}{\lambda_1^k} + \cO \!\left( \lambda_1^k \beta^k \right),
\label{eq:det}
\end{equation}
where we have also used \eqref{eq:c3}.
The leading order terms in \eqref{eq:det} stem from the top-left $2 \times 2$ block of $\Gamma J^k$.

Notice $\lambda_1 \beta < 1$ because $\beta < \lambda_2$ and $\lambda_1 \lambda_2 = 1$.
Thus $\det \!\left( I - \Gamma J^k \right) \to 1+c$ as $k \to \infty$.
Since $c \ne -1$ by assumption \eqref{it:infinitylam1lam2c}, we have $\det \!\left( I - \Gamma J^k \right) \ne 0$
and thus a unique $\cX^k \cY$-cycle, that we denote $\left\{ x^{\cX^k \cY}_i \right\}$,
for sufficiently large values of $k$.

\myStep{4}
Here we derive asymptotic expressions for the points of the $\cX^k \cY$-cycles
associated with $x^{\cX}_0$ in $u$-coordinates.

The point $x^{\cX^k \cY}_0$ is a fixed point of $f_{\cY} \circ f_{\cX}^k$.
Thus $h \!\left( x^{\cX^k \cY}_0 \right)$ is a fixed point of $g_{\cY} \circ g_{\cX}^k$.
By \eqref{eq:gX} and \eqref{eq:gY},
\begin{equation}
h \!\left( x^{\cX^k \cY}_0 \right) = \left( I - \Gamma J^k \right)^{-1} \psi \;.
\label{eq:u0}
\end{equation}
In view of the definition of an adjugate matrix (as the transpose of a cofactor matrix),
we evaluate the minors of $I - \Gamma J^k$ using \eqref{eq:GammaJk} to produce
\begin{equation}
{\rm adj} \!\left( I - \Gamma J^k \right) \psi =
\begin{bmatrix} \frac{\gamma_{12} \psi_2}{\lambda_1^k} + \cO \!\left( \beta^k \right) \\
\psi_2 + \cO \!\left( \lambda_1^k \beta^k \right) \\
\cO(1) \\
\vdots \\
\cO(1) \end{bmatrix}.
\label{eq:adj}
\end{equation}
By then using \eqref{eq:det} and \eqref{eq:adj} to evaluate \eqref{eq:u0} we obtain
\begin{equation}
h \!\left( x^{\cX^k \cY}_0 \right) =
\begin{bmatrix} \frac{\gamma_{12} \psi_2}{(1+c) \lambda_1^k} +
\cO \!\left( \frac{1}{\lambda_1^{2 k}} \right) + \cO \!\left( \beta^k \right) \\
\frac{\psi_2}{1+c} + \cO \!\left( \frac{1}{\lambda_1^k} \right) +
\cO \!\left( \lambda_1^k \beta^k \right) \\
\cO(1) \\
\vdots \\
\cO(1) \end{bmatrix}.
\label{eq:u02}
\end{equation}
For all $j = 1,\ldots,k$, we have
$x^{\cX^k \cY}_{j n} = f_{\cX}^j \!\left( x^{\cX^k \cY}_0 \right)$.
Therefore, by \eqref{eq:gX},
\begin{equation}
h \!\left( x^{\cX^k \cY}_{j n} \right) = J^j h \!\left( x^{\cX^k \cY}_0 \right), \quad
{\rm for~all~} j = 0,\ldots,k \;.
\label{eq:ujn}
\end{equation}
In particular, by \eqref{eq:u02} and \eqref{eq:ujn} we can write
\begin{equation}
h \!\left( x^{\cX^k \cY}_{k n} \right) =
\frac{\gamma_{12} \psi_2}{1 + c} \,e_1 + \cO \!\left( \frac{1}{\lambda_1^k} \right) +
\cO \!\left( \lambda_1^k \beta^k \right).
\label{eq:ukn}
\end{equation}

\myStep{5}
Next we identify a neighbourhood of $x^{\cX}_0$ within which all points follow $\cX$ under $f$.
As a first step to demonstrating the admissibility of the $\cX^k \cY$-cycles,
we use \eqref{eq:ujn} to show that,
for sufficiently large values of $k$,
all but a handful of the $x^{\cX^k \cY}_{j n}$ (which are dealt with in Step 6) lie within this neighbourhood.

By assumption \eqref{it:infinityXcycle}, the $\cX$-cycle is admissible with no points on $\Sigma$.
Thus there exists a neighbourhood of $x^{\cX}_0$ within which all points follow $\cX$ under $f$.
In $u$-coordinates this means that there exists $r > 0$ such that any point in the open ball
\begin{equation}
B_r = \left\{ u \in \mathbb{R}^N ~\middle|~ \| u \| < r \right\}
\label{eq:Br}
\end{equation}
follows $\cX$ under $g$.
By \eqref{eq:u02} and \eqref{eq:ujn}, there exists $j^* \in \mathbb{Z}$
such that for all $k > 2 j^*$ and all $j^* \le j < k-j^*$, we have
$h \!\left( x^{\cX^k \cY}_{j n} \right) \in B_r$, see Fig.~\ref{fig:uCoords}.
That is, $h \!\left( x^{\cX^k \cY}_{j n} \right)$ follows $\cX$ under $g$ for all $j^* \le j < k-j^*$.
More succinctly, $h \!\left( x^{\cX^k \cY}_{j^* n} \right)$ follows $\cX^{k - 2 j^*}$ under $g$.

\begin{figure}[b!]
\begin{center}
\setlength{\unitlength}{1cm}
\begin{picture}(16,8)
\put(0,0){\includegraphics[width=16cm]{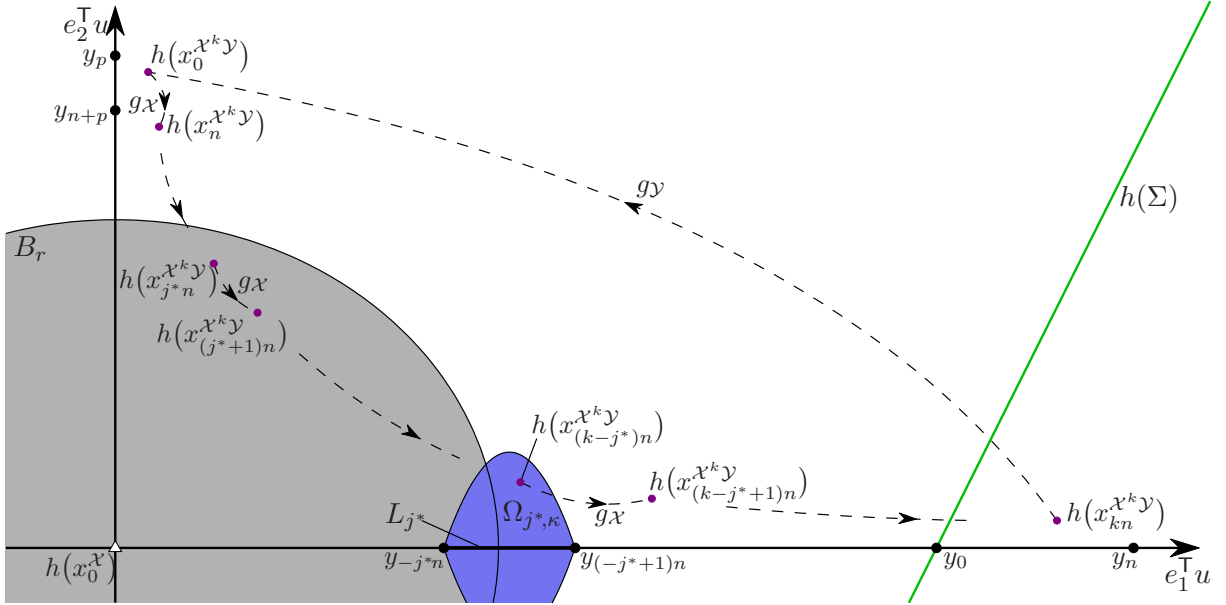}}
\put(15.4,.28){\small $e_1^{\sf T} u$}
\put(.76,7.6){\small $e_2^{\sf T} u$}
\put(14.8,5.3){\small $h(\Sigma)$}
\put(.1,4.63){\small $B_r$}
\put(6.6,1.1){\small $\Omega_{j^*\!,\kappa}$}
\put(5.04,1.08){\small $L_{j^*}$}
\put(1.02,7.23){\footnotesize $y_p$}
\put(.64,6.48){\footnotesize $y_{n+p}$}
\put(5.03,.53){\footnotesize $y_{-j^* \!n}$}
\put(7.63,.53){\footnotesize $y_{(-j^*+1) n}$}
\put(12.46,.53){\footnotesize $y_0$}
\put(14.7,.53){\footnotesize $y_n$}
\put(.52,.38){\footnotesize $h \big( x^{\cX}_0 \big)$}
\put(1.92,7.2){\footnotesize $h \big( x^{\cX^k \cY}_0 \big)$}
\put(2.13,6.27){\footnotesize $h \big( x^{\cX^k \cY}_n \big)$}
\put(1.5,4.2){\footnotesize $h \big( x^{\cX^k \cY}_{j^* n} \big)$}
\put(2,3.48){\footnotesize $h \big( x^{\cX^k \cY}_{(j^*+1)n} \big)$}
\put(6.99,2.27){\footnotesize $h \big( x^{\cX^k \cY}_{(k-j^*)n} \big)$}
\put(8.55,1.52){\footnotesize $h \big( x^{\cX^k \cY}_{(k-j^*+1)n} \big)$}
\put(14.1,1.1){\footnotesize $h \big( x^{\cX^k \cY}_{k n} \big)$}
\put(1.63,6.6){\footnotesize $g_{\cX}$}
\put(3.08,4.21){\footnotesize $g_{\cX}$}
\put(7.83,1.11){\footnotesize $g_{\cX}$}
\put(8.38,5.49){\footnotesize $g_{\cY}$}
\end{picture}
\caption{
A sketch of the phase space of $f$ in $u$-coordinates \eqref{eq:h}.
The first axis $e_1^{\sf T} u$ contains $E^u \big( x^{\cX}_0 \big)$.
The second axis $e_2^{\sf T} u$ corresponds to the slowest direction within $E^s \big( x^{\cX}_0 \big)$.
We show some points of the homoclinic $\cS$-orbit $\{ y_i \}$ and a typical $\cX^k \cY$-cycle.
\label{fig:uCoords}
}
\end{center}
\end{figure}

\myStep{6}
Here we complete the demonstration that $\cX^k \cY$-cycles are admissible for sufficiently large values of $k$.
In view of the final statement of Step 5, it remains to show that
$h \!\left( x^{\cX^k \cY}_{(k-j^*) n} \right)$ follows $\cX^{j^*} \cY \cX^{j^*}$ under $g$
for sufficiently large values of $k$.
We do this by showing that $h \!\left( x^{\cX^k \cY}_{(k-j^*) n} \right)$
approaches the line segment
\begin{equation}
L_{j^*} = \cL \!\left( h \!\left( y_{-j^* n} \right), h \!\left( y_{(-j^*+1)n} \right) \right),
\label{eq:LjStar}
\end{equation}
as $k \to \infty$, see Fig.~\ref{fig:uCoords}.

First, recall that $y_0$ follows $\cY \cX^{\infty}$ under $f$
and thus $y_{-j^* n}$ follows $\cX^{j^*} \cY \cX^{\infty}$ under $f$.
As shown in Step 1, $y_0$ follows $\cX \cY \cX^{\infty}$ under $f$,
thus $y_{(-j^*+1)n}$ also follows $\cX^{j^*} \cY \cX^{\infty}$ under $f$.
By Lemma \ref{le:lineSegment} every point in
$\cL \!\left( y_{-j^* n}, y_{(-j^*+1)n} \right)$ follows $\cX^{j^*} \cY \cX^{\infty}$ under $f$.
In terms of $u$-coordinates, this says that every $u \in L_{j^*}$ follows $\cX^{j^*} \cY \cX^{\infty}$ under $g$.

By assumption \eqref{it:infinityyi} and \eqref{eq:yjnpi}, the line segment
$f^i \!\left( \cL \!\left( y_{-j^* n}, y_{(-j^*+1)n} \right) \right)$
does not have both endpoints in $\Sigma$ for any $i \ge 0$.
Thus $f^i \!\left( \cL \!\left( y_{-j^* n}, y_{(-j^*+1)n} \right) \right) \cap \Sigma = \varnothing$,
for all $i \ge 0$.
Translated into $u$-coordinates, this says
\begin{equation}
g^i \!\left( L_{j^*} \right) \cap h(\Sigma) = \varnothing \;, \quad {\rm for~all~} i \ge 0 \;.
\label{eq:LjStarIntSigma}
\end{equation}
By \eqref{eq:hyjn}, we can write
\begin{equation}
L_{j^*} = \left\{ a e_1 ~\middle|~ a_{-j^*} < a < a_{-j^*+1} \right\}.
\label{eq:LjStar2}
\end{equation}
We now fatten $L_{j^*}$ into a rugby ball shaped region
\begin{equation}
\Omega_{j^*,\kappa} = \left\{ u \in \mathbb{R}^N ~\middle|~
a = e_1^{\sf T} u ,\,
a_{-j^*} < a < a_{-j^*+1} ,\,
\| u - a e_1 \| < \kappa \left( a - a_{-j^*} \right) \left( a_{-j^*+1} - a \right) \right\},
\label{eq:Omega}
\end{equation}
where $\kappa > 0$ is a measure of the magnitude of fattening, see Fig.~\ref{fig:uCoords}.
By \eqref{eq:LjStarIntSigma} there exists $\kappa > 0$ such that
$g^i \!\left( \Omega_{j^*,\kappa} \right) \cap h(\Sigma) = \varnothing$
for all $i = 0,\ldots,2 j^* n + p - 1$.
This is because each $g^i \!\left( L_{j^*} \right)$ is either bounded away from $h(\Sigma)$
or protrudes from $h(\Sigma)$ at a non-zero angle,
so we can find $\kappa > 0$ such that the finite collection of fattened sets
$\left\{ g^i \!\left( \Omega_{j^*,\kappa} \right) \right\}_{i=0}^{2 j^* n + p - 1}$
does not intersect $h(\Sigma)$.
Therefore every $u \in \Omega_{j^*,\kappa}$ follows $\cX^{j^*} \cY \cX^{j^*}$ under $g$.

We now fix $\kappa > 0$ at such a value.
It remains to show that
$h \!\left( x^{\cX^k \cY}_{(k-j^*)n} \right) \in \Omega_{j^*,\kappa}$
for sufficiently large values of $k$.
To do this we substitute \eqref{eq:psi2}
(which is valid because $a_0 e_1 = h(y_0)$ follows both $\cX \cY$ and $\cY \cX$ under $g$)
with $\lambda_1 \lambda_2 = 1$
into \eqref{eq:ukn} to obtain
\begin{equation}
h \!\left( x^{\cX^k \cY}_{k n} \right) =
\frac{a_0 (\lambda_1 + 1) c}{1+c} \,e_1 +
\cO \!\left( \frac{1}{\lambda_1^k} \right) +
\cO \!\left( \lambda_1^k \beta^k \right),
\label{eq:ukn2}
\end{equation}
where we have also used \eqref{eq:c3}.
Thus by \eqref{eq:aj} and \eqref{eq:ujn},
\begin{equation}
h \!\left( x^{\cX^k \cY}_{(k-j^*) n} \right) =
\frac{a_{-j^*} (\lambda_1 + 1) c}{1+c} \,e_1 +
\cO \!\left( \frac{1}{\lambda_1^k} \right) +
\cO \!\left( \lambda_1^k \beta^k \right).
\end{equation}
The assumption $\lambda_2 < c < 1$ implies\removableFootnote{
This is straightforward but requires several lines of working.
In fact we actually obtain the stronger result
\begin{equation}
1 < \frac{(\lambda_1 + 1) c}{1+c} < \frac{\lambda_1 + 1}{2} \;.
\end{equation}
which is interesting but not helpful for the proof.
}
\begin{equation}
1 < \frac{(\lambda_1 + 1) c}{1+c} < \lambda_1 \;,
\label{eq:admissibilityCondition}
\end{equation}
and thus
\begin{equation}
a_{-j^*} < \lim_{k \to \infty}
e_1^{\sf T} h \!\left( x^{\cX^k \cY}_{(k-j^*) n} \right) < a_{-j^*+1} \;.
\end{equation}
Hence $h \!\left( x^{\cX^k \cY}_{(k-j^*) n} \right) \in \Omega_{j^*,\kappa}$
and so the $\cX^k \cY$-cycle is admissible, for sufficiently large values of $k$.

\myStep{7}
Finally we verify the asymptotic stability of the $\cX^k \cY$-cycles
by showing that all eigenvalues of $\Gamma J^k$,
and hence also of $M_{\cX^k \cY}$, have modulus less than $1$ for sufficiently large values of $k$.
To do this we apply the Gershgorin circle theorem to a matrix similar to $\Gamma J^k$.

By \eqref{eq:c3} and \eqref{eq:GammaJk}, it can be seen that $\Gamma J^k$ has an eigenvalue
that converges to the purely imaginary value ${\rm i} \sqrt{c}$ as $k \to \infty$.
The corresponding eigenvector is asymptotic to
\begin{equation}
v = \begin{bmatrix}
\frac{{\rm i} \sqrt{c}}{\lambda_1^k} \\
\gamma_{21} \\
\vdots \\
\gamma_{N1}
\end{bmatrix}.
\label{eq:eigenvector}
\end{equation}
This motivates use of the non-singular matrix
\begin{equation}
T = \left[ \begin{array}{@{}cc|@{\hspace{-.1mm}}c@{}}
v & \overline{v} &
\begin{array}{c} O \\ \hline I \end{array}
\end{array} \right],
\end{equation}
where $\overline{v}$ is the complex conjugate of $v$,
$O$ is a $2 \times (N-2)$ block of zeros,
and $I$ is the $(N-2) \times (N-2)$ identity matrix. 
Direct calculations produce
\begin{equation}
T^{-1} \Gamma J^k T = \left[ \begin{array}{@{}c|@{\hspace{-.1mm}}c@{}}
\begin{array}{cc}
{\rm i} \sqrt{c} + \cO \!\left( \lambda_1^k \beta^k \right) &
\cO \!\left( \lambda_1^k \beta^k \right) \\
\cO \!\left( \lambda_1^k \beta^k \right) &
-{\rm i} \sqrt{c} + \cO \!\left( \lambda_1^k \beta^k \right) \\
\cO \!\left( \frac{1}{\lambda_1^k} \right) & \cO \!\left( \frac{1}{\lambda_1^k} \right) \\
\vdots & \vdots \\
\cO \!\left( \frac{1}{\lambda_1^k} \right) & \cO \!\left( \frac{1}{\lambda_1^k} \right)
\end{array} &
\begin{array}{c}
\cO \!\left( \lambda_1^k \beta^k \right) \\
\cO \!\left( \lambda_1^k \beta^k \right) \\
\hline \\ \\
\cO \!\left( \beta^k \right) \\ \vspace{2mm} \\
\end{array}
\end{array} \right].
\end{equation}
By the Gershgorin circle theorem,
each eigenvalue of this matrix lies within
$\cO \!\left( \lambda_1^k \beta^k \right)$ of ${\rm i} \sqrt{c}$ or $-{\rm i} \sqrt{c}$
or within $\cO \!\left( \frac{1}{\lambda_1^k} \right)$ of $0$.
Since eigenvalues are invariant under similarity transforms,
the same is true for $\Gamma J^k$.
Since $\lambda_2 < c < 1$, by assumption,
all eigenvalues of $\Gamma J^k$ therefore have modulus less than $1$ for sufficiently large values of $k$.
\hfill$\blacksquare$

\subsection{Proof of Theorem \ref{th:homoclinic}}
\label{sub:proof2}


\myStep{1}
First we use the assumption that $\cX^k \cY$-cycles are stable to obtain some
restrictions on the coefficients describing $f$, in particular $\gamma_{11} = 0$.

If an $\cX^k \cY$-cycle is stable
then the eigenvalues of $M_{\cX^k \cY}$ have modulus less than or equal to $1$.
This implies $\big| {\rm trace}(M_{\cX^k \cY}) \big| \le N$.
We have $g_{\cX^k \cY}(u) = \Gamma J^k u + \psi$, from which we obtain
\begin{equation}
{\rm trace}(M_{\cX^k \cY}) =
{\rm trace} \!\left( \Gamma J^k \right) =
\gamma_{11} \lambda_1^k + \gamma_{22} \lambda_2^k + \cO \!\left( \beta^k \right).
\label{eq:traceProof1}
\end{equation}
Since $\lambda_1 > 1$, we must have
\begin{equation}
\gamma_{11} = 0 \;,
\label{eq:gamma11}
\end{equation}
and therefore by \eqref{eq:c2}
\begin{equation}
c = -\gamma_{12} \gamma_{21} \;.
\label{eq:c3b}
\end{equation}
By using \eqref{eq:Jk} and substituting \eqref{eq:gamma11} and \eqref{eq:c3b} we obtain
\begin{equation}
\det \!\left( I - \Gamma J^k \right) =
1 + c \lambda_1^k \lambda_2^k - \gamma_{22} \lambda_2^k +
\cO \!\left( \lambda_1^k \beta^k \right).
\label{eq:det2}
\end{equation}
Again the assumption that $\cX^k \cY$-cycles are stable implies
$\det \!\left( I - \Gamma J^k \right) \not\to \infty$ as $k \to \infty$.
Since $c \ne 0$ by assumption, we must have
\begin{equation}
\lambda_1 \lambda_2 \le 1 \;.
\label{eq:lambda12Proof1}
\end{equation}
If $\lambda_1 \lambda_2 = 1$, then $\det \!\left( I - \Gamma J^k \right) \to 1 + c$ as $k \to \infty$,
while if $0 < \lambda_1 \lambda_2 < 1$ (which turns out not to be the case),
then $\det \!\left( I - \Gamma J^k \right) \to 1$ as $k \to \infty$.
In either case the limiting value is nonzero because $c \ne -1$ by assumption.

\myStep{2}
Here we derive asymptotic expressions for points of the $\cX^k \cY$-cycles associated with $x^{\cX}_0$
in $u$-coordinates.

We have $h \!\left( x^{\cX^k \cY}_0 \right) = \left( I - \Gamma J^k \right)^{-1} \psi$
where $\psi$ is not the zero vector (for otherwise
$h \!\left( x^{\cX^k \cY}_0 \right) = h \!\left( x^{\cX}_0 \right)$ for all $k$, which is not possible).
In addition, $h \!\left( x^{\cX^k \cY}_{j n} \right) = J^j h \!\left( x^{\cX^k \cY}_0 \right)$
for all $j = 0,\ldots,k$.
By evaluating the minors of $I - \Gamma J^k$ subject to \eqref{eq:gamma11}
(in a similar fashion to Step 4 of the proof of Theorem \ref{th:infinity})\removableFootnote{
I derived
\begin{equation}
{\rm adj} \!\left( I - \Gamma J^k \right) =
\left[ \begin{array}{ccccc}
1 - \gamma_{22} \lambda_2^k + \cO \!\left( \beta^k \right) &
\gamma_{12} \lambda_2^k + \cO \!\left( \lambda_2^k \beta^k \right) &
\cO \!\left( \beta^k \right) & \cdots & \cO \!\left( \beta^k \right) \\
\gamma_{21} \lambda_1^k + \cO \!\left( \lambda_1^k \beta^k \right) &
1 + \cO \!\left( \lambda_1^k \beta^k \right) &
\cO \!\left( \lambda_1^k \beta^k \right) & \cdots & \cO \!\left( \lambda_1^k \beta^k \right) \\
\gamma_{31} \lambda_1^k + \cO \!\left( \lambda_1^k \lambda_2^k \right) &
\cO \!\left( \lambda_1^k \lambda_2^k \right) &
1 + \cO \!\left( \lambda_1^k \lambda_2^k \right) & & \vdots \\
\vdots & \vdots & & \ddots \\
\gamma_{N1} \lambda_1^k + \cO \!\left( \lambda_1^k \lambda_2^k \right) &
\cO \!\left( \lambda_1^k \lambda_2^k \right) & \cdots & &
1 + \cO \!\left( \lambda_1^k \lambda_2^k \right)
\end{array} \right],
\end{equation}
by explicitly computing the cofactor matrix of $I - \Gamma J^k$.
This enabled me to deduce the error terms.
},
we obtain
\begin{equation}
h \!\left( x^{\cX^k \cY}_{j n} \right) = \frac{1}{\det \!\left( I - \Gamma J^k \right)}
\left[ \begin{array}{c}
\lambda_1^j \left( 1 - \gamma_{22} \lambda_2^k \right) \psi_1 +
\gamma_{12} \lambda_1^j \lambda_2^k \psi_2 +
\cO \!\left( \lambda_1^j \beta^k \right) \\
\gamma_{21} \lambda_1^k \lambda_2^j \psi_1 +
\lambda_2^j \psi_2 +
\cO \!\left( \lambda_1^k \lambda_2^j \beta^k \right) \\
\cO \!\left( \lambda_1^k \beta^j \right) \\
\vdots \\
\cO \!\left( \lambda_1^k \beta^j \right) \\
\end{array} \right], \quad
{\rm for~all~} j = 0,\ldots,k \;.
\label{eq:uSjnX2}
\end{equation}

\myStep{3}
Here we use the admissibility of the $\cX^k \cY$-cycles to obtain further restrictions on the coefficients
describing $f$, in particular $\psi_1 = 0$ and $\lambda_1 \lambda_2 = 1$.

For each value of $k$ for which the $\cX^k \cY$-cycle is admissible with no points on $\Sigma$,
the points $x^{\cX^k \cY}_{(k-1)n}$ and $x^{\cX^k \cY}_{k n}$ lie on different sides of $\Sigma$ because
$\cX_0 \ne \cY_0$.
Thus $h \!\left( x^{\cX^k \cY}_{(k-1)n} \right)$ and $h \!\left( x^{\cX^k \cY}_{k n} \right)$
lie on different sides of $h(\Sigma)$ for arbitrarily large values of $k$.

But if $\psi_1 \ne 0$, then by \eqref{eq:uSjnX2} as $k \to \infty$ the first components of 
$h \!\left( x^{\cX^k \cY}_{(k-1)n} \right)$ and $h \!\left( x^{\cX^k \cY}_{k n} \right)$
diverge while all other components converge.
Thus $h(\Sigma)$ must be parallel to the first coordinate axis.
In $x$-coordinates this says that $\zeta_1$ is parallel to $\Sigma$.
But this is not possible because $e_1^{\sf T} \zeta_1 \ne 0$, by assumption, thus
\begin{equation}
\psi_1 = 0 \;.
\label{eq:psi1}
\end{equation}
Therefore \eqref{eq:uSjnX2} reduces to
\begin{equation}
h \!\left( x^{\cX^k \cY}_{j n} \right) = \frac{1}{\det \!\left( I - \Gamma J^k \right)}
\left[ \begin{array}{c}
\gamma_{12} \lambda_1^j \lambda_2^k \psi_2 +
\cO \!\left( \lambda_1^j \beta^k \right) \\
\lambda_2^j \psi_2 +
\cO \!\left( \lambda_1^k \lambda_2^j \beta^k \right) \\
\cO \!\left( \beta^j \right) \\
\vdots \\
\cO \!\left( \beta^j \right)
\end{array} \right], \quad
{\rm for~all~} j = 0,\ldots,k \;.
\label{eq:uSjnX3}
\end{equation}

Notice $\gamma_{12} \ne 0$, by \eqref{eq:c3b} and because $c \ne 0$ by assumption.
Now suppose for a contradiction that either $\lambda_1 \lambda_2 < 1$ or $\psi_2 = 0$.
In this case $h \!\left( x^{\cX^k \cY}_{(k-1) n} \right)$ and $h \!\left( x^{\cX^k \cY}_{k n} \right)$
converge to the origin as $k \to \infty$.
Since $f$ is continuous, this implies that the distance between the
$(n+p)^{\rm th}$ iterates of
$h \!\left( x^{\cX^k \cY}_{(k-1) n} \right)$ and $h \!\left( x^{\cX^k \cY}_{k n} \right)$
tends to zero as $k \to \infty$.
However, these iterates are the points
$h \!\left( x^{\cX^k \cY}_0 \right)$ and $h \!\left( x^{\cX^k \cY}_n \right)$.
We have $h \!\left( x^{\cX^k \cY}_0 \right) = g_{\cY} \!\left( h \!\left( x^{\cX^k \cY}_{k n} \right) \right)$
which by \eqref{eq:gY} converges to $\psi$ as $k \to \infty$
(given that $h \!\left( x^{\cX^k \cY}_{k n} \right)$ converges to the origin).
Thus by \eqref{eq:gX}, $h \!\left( x^{\cX^k \cY}_{k n} \right) \to J \psi$ as $k \to \infty$.
But $\psi \ne J \psi$, for otherwise $J$ would have an eigenvalue $1$ which is not possible
by \eqref{eq:eigenvalueCondition}.
Thus the distance between $h \!\left( x^{\cX^k \cY}_0 \right)$ and $h \!\left( x^{\cX^k \cY}_n \right)$
does not go to zero as $k \to \infty$.
This a contradiction, hence $\lambda_1 \lambda_2 = 1$ and $\psi_2 \ne 0$.

\myStep{4}
Here we use the observation that points of $\cX^k \cY$-cycles converge to those of the $\cX$-cycle
as $k \to \infty$ to show that the $\cX$-cycle is admissible and $x^{\cX}_0 \notin \Sigma$.

By \eqref{eq:uSjnX3} with $j = \left\lfloor \frac{k}{2} \right\rfloor$,
the point $h \!\left( x^{\cX^k \cY}_{j n} \right)$ converges to the origin as $k \to \infty$.
In $x$-coordinates this means that $x^{\cX^k \cY}_{j n} \to x^{\cX}_0$ as $k \to \infty$.
Each $x^{\cX^k \cY}_{j n}$ follows $\cX$ under $f$ for sufficiently large values of $k$
(by the admissibility of the $\cX^k \cY$-cycles).
Thus $x^{\cX}_0$ must also follow $\cX$ under $f$ because $f$ is continuous.
That is, the $\cX$-cycle is admissible.

By \eqref{eq:det2} and \eqref{eq:uSjnX3} with $\lambda_1 \lambda_2 = 1$,
\begin{equation}
h \!\left( x^{\cX^k \cY}_{(k-1)n} \right) \to \frac{\gamma_{12} \psi_2}{(1+c) \lambda_1} \,e_1 \;, \quad
h \!\left( x^{\cX^k \cY}_{k n} \right) \to \frac{\gamma_{12} \psi_2}{1+c} \,e_1 \;,
\nonumber
\end{equation}
as $k \to \infty$.
But $h \!\left( x^{\cX^k \cY}_{(k-1)n} \right)$ and $h \!\left( x^{\cX^k \cY}_{k n} \right)$ lie on different
sides of $h(\Sigma)$ for arbitrarily large values of $k$,
so if $h(\Sigma)$ passes through the origin (so that $x^{\cX}_0 \in \Sigma$)
it must contain the first coordinate axis.
As in Step 3, this is not possible because $e_1^{\sf T} \zeta_1 \ne 0$.
Therefore $x^{\cX}_0 \notin \Sigma$.

\myStep{5}
Here we construct the homoclinic $\cS$-orbit $\{ y_i \}$.
Our main efforts are in showing that $y_{\alpha} \in \Sigma$.
For this step we work in $x$-coordinates because our arguments utilise the continuity of $f$
and the formula \eqref{eq:xi}.

Since $\gamma_{11} = 0$ and $\psi_1 = 0$,
by \eqref{eq:gY} we have $e_1^{\sf T} g_{\cY}(u) = 0$ for any $u = a e_1$, where $a \in \mathbb{R}$.
In $x$-coordinates this means that every $x \in E^u \!\left( x^{\cX}_0 \right)$
maps to $E^s \!\left( x^{\cX}_0 \right)$ under $f_{\cY}$.

Let $y_0 = E^u \!\left( x^{\cX}_0 \right) \cap \Sigma$,
which is a unique point because $e_1^{\sf T} \zeta_1 \ne 0$ by assumption.
Since $y_0 \in E^u \!\left( x^{\cX}_0 \right)$, we have
$f_{\cY}(y_0) \in E^s \!\left( x^{\cX}_0 \right)$.
Therefore there exists an $\cS$-orbit $\{ y_i \}$
involving $y_0$ that is homoclinic to the $\cX$-cycle (although it may not be admissible).
Also $y_0 \in \Sigma$, thus by Lemma \ref{le:follow},
$f_{\cY^{\overline{0}}}(y_0) = f_{\cY}(y_0)$ and so
$f_{\cY^{\overline{0}}}(y_0) \in E^s \!\left( x^{\cX}_0 \right)$.
Then by assumption \eqref{it:homoclinicY0bar}, $y_0$ must be the only element of $E^u \!\left( x^{\cX}_0 \right)$
that maps to $E^s \!\left( x^{\cX}_0 \right)$ under $f_{\cY^{\overline{0}}}$.

Let $\tilde{\cX}$ denote the first $\alpha$ elements of $\cX \cY$
and $\tilde{\cY}$ denote the remaining elements of $\cX \cY$.
That is
\begin{equation}
\tilde{\cX} \tilde{\cY} = \cX \cY \;,
\label{eq:XtildeYtilde}
\end{equation}
where $\tilde{\cX}$ is of length $\alpha$
and $\tilde{\cY}$ is of length $n+p-\alpha$.
Since $\left( \cX \cY \right)^{\overline{0} \, \overline{\alpha}} = \cY \cX$,
\begin{equation}
\tilde{\cX}^{\overline{0}} \tilde{\cY}^{\overline{0}} = \cY \cX \;.
\label{eq:Xtilde0barYtilde0bar}
\end{equation}
Since $y_0 \in \Sigma$,
\begin{equation}
y_{\alpha} = f_{\tilde{\cX}}(y_0) = f_{\tilde{\cX}^{\overline{0}}}(y_0) \;,
\label{eq:yalpha}
\end{equation}
by Lemma \ref{le:follow}.

For any $a \in \mathbb{R}$, we have $x^{\cX}_0 + a \zeta_1 \in E^u \!\left( x^{\cX}_0 \right)$.
Consider the affine function
\begin{equation}
\phi_1(a) = e_1^{\sf T} f_{\tilde{\cX}} \!\left( x^{\cX}_0 + a \zeta_1 \right).
\label{eq:phi1}
\end{equation}
Below we show that $\frac{d \phi_1}{d a} \ne 0$.
Given that this is true, there exists unique $\hat{a} \in \mathbb{R}$ such that $\phi_1(\hat{a}) = 0$.
Let $\hat{x} = x^{\cX}_0 + \hat{a} \zeta_1$.
Then $f_{\tilde{\cX}}(\hat{x}) \in \Sigma$, and so
\begin{equation}
f_{\tilde{\cX} \tilde{\cY}}(\hat{x}) = f_{\left( \tilde{\cX} \tilde{\cY} \right)^{\overline{\alpha}}}(\hat{x}) \;,
\label{eq:xHatIdentity1}
\end{equation}
by Lemma \ref{le:follow}.
By $\cX \cY = \left( \cY \cX \right)^{\overline{0} \, \overline{\alpha}}$ and \eqref{eq:XtildeYtilde},
we can alter the symbols in \eqref{eq:xHatIdentity1} to produce
\begin{equation}
f_{\cX \cY}(\hat{x}) = f_{\cY^{\overline{0}} \cX}(\hat{x}) \;.
\label{eq:xHatIdentity2}
\end{equation}
Since $\hat{x} \in E^u \!\left( x^{\cX}_0 \right)$, we have
$f_{\cX}(\hat{x}) \in E^u \!\left( x^{\cX}_0 \right)$
(because $E^u \!\left( x^{\cX}_0 \right)$ is invariant under $f_{\cX}$),
and thus $f_{\cX \cY}(\hat{x}) \in E^s \!\left( x^{\cX}_0 \right)$.
That is, $f_{\cY^{\overline{0}} \cX}(\hat{x}) \in E^s \!\left( x^{\cX}_0 \right)$, by \eqref{eq:xHatIdentity2}.
Thus $f_{\cY^{\overline{0}}}(\hat{x}) \in E^s \!\left( x^{\cX}_0 \right)$
(because $E^s \!\left( x^{\cX}_0 \right)$ is invariant under $f_{\cX}$).
Above we showed that $y_0$ is the only element of $E^u \!\left( x^{\cX}_0 \right)$
that maps to $E^s \!\left( x^{\cX}_0 \right)$ under $f_{\cY^{\overline{0}}}$, hence $\hat{x} = y_0$.
Thus $f_{\tilde{\cX}}(\hat{x})$ is the point $y_{\alpha}$, and so $y_{\alpha} \in \Sigma$.

To complete this step we show that $\frac{d \phi_1}{d a} \ne 0$.
Consider the affine function
\begin{equation}
\phi_2(a) = \omega_1^{\sf T} \!\left( f_{\cY^{\overline{0}} \cX}
\!\left( x^{\cX}_0 + a \zeta_1 \right) - x^{\cX}_0 \right) \;.
\label{eq:phi2}
\end{equation}
This function returns a measure of the distance of the point
$f_{\cY^{\overline{0}} \cX} \!\left( x^{\cX}_0 + a \zeta_1 \right)$ from $E^s \!\left( x^{\cX}_0 \right)$,
where $x^{\cX}_0 + a \zeta_1 \in E^u \!\left( x^{\cX}_0 \right)$.
We have $\phi_2(a_0) = 0$ because $h(y_0) = a_0 e_1$ and so $y_0 = x^{\cX}_0 + a_0 \zeta_1$.
Since $y_0$ is the only element of $E^u \!\left( x^{\cX}_0 \right)$ that
maps to $E^s \!\left( x^{\cX}_0 \right)$ under $f_{\cY^{\overline{0}}}$,
and $E^s \!\left( x^{\cX}_0 \right)$ is invariant under $f_{\cX}$,
the value $a_0$ is the unique zero of $\phi_2(a)$.
Thus we must have $\frac{d \phi_2}{d a} \ne 0$.

From \eqref{eq:xi} and \eqref{eq:MX} we obtain the formula
\begin{equation}
f_{\cY^{\overline{0}} \cX}(x) =
f_{\left( \cY \cX \right)^{\overline{0} \, \overline{\alpha}}}(x) +
\xi_{\tilde{\cY}} e_1^{\sf T} f_{\tilde{\cX}}(x) \;,
\label{eq:fY0barX}
\end{equation}
where
\begin{equation}
\xi_{\tilde{\cY}} = A_{\tilde{\cY}_{n+p-\alpha-1}} \cdots A_{\tilde{\cY}_1} \xi \;.
\end{equation}
By using $\cX \cY = \left( \cY \cX \right)^{\overline{0} \, \overline{\alpha}}$,
we can therefore write
\begin{equation}
\phi_2(a) = \omega_1^{\sf T} \!\left(
f_{\cX \cY} \!\left( x^{\cX}_0 + a \zeta_1 \right) - x^{\cX}_0 + \xi_{\tilde{\cY}} \phi_1(a) \right).
\nonumber
\end{equation}
But $\omega_1^{\sf T} \!\left( f_{\cX \cY} \!\left( x^{\cX}_0 + a \zeta_1 \right) - x^{\cX}_0 \right)$
is identically zero because every $x \in E^u \!\left( x^{\cX}_0 \right)$
maps to $E^s \!\left( x^{\cX}_0 \right)$ under $f_{\cY}$, thus
$\phi_2(a) = \omega_1^{\sf T} \xi_{\tilde{\cY}} \phi_1(a)$.
Therefore $\frac{d \phi_1}{d a} \ne 0$ because $\frac{d \phi_2}{d a} \ne 0$.

\myStep{6}
Next we use the assumption that $\cX^k \cY$-cycles are admissible to show that $c \ge \lambda_2$,
and use the assumption that $\cX^k \cY$-cycles are stable to show that $c \le 1$.

Equation \eqref{eq:GammaJk} is a formula for $\Gamma J^k$ obtained in the proof of Theorem \ref{th:infinity}.
This formula is valid here
because we have shown that $\gamma_{11} = 0$, $\psi_1 = 0$ and $\lambda_1 \lambda_2 = 1$.
As noted above, it can be seen from \eqref{eq:GammaJk} that $\Gamma J^k$ has an
eigenvalue that converges to ${\rm i} \sqrt{c}$ as $k \to \infty$.
The stability of the $\cX^k \cY$-cycles requires all eigenvalues of $\Gamma J^k$
to have modulus less than or equal to $1$, hence we must have $|c| \le 1$.

Equation \eqref{eq:uSjnX3} with $j = k$ gives
\begin{equation}
h \!\left( x^{\cX^k \cY}_{k n} \right) = \frac{\gamma_{12} \psi_2}{1+c} \,e_1 +
\cO \!\left( \frac{1}{\lambda_1^k} \right) + \cO \!\left( \lambda_1^k \beta^k \right),
\nonumber
\end{equation}
where we have substituted \eqref{eq:det2} and $\lambda_1 \lambda_2 = 1$
(and recall $c \ne -1$, by assumption).
By then substituting \eqref{eq:psi2} 
(since we have shown that $a_0 e_1 = h(y_0)$ follows both $\cX \cY$ and $\cY \cX$ under $g$)
with $\lambda_1 \lambda_2 = 1$ and \eqref{eq:c3b}, we obtain
\begin{equation}
h \!\left( x^{\cX^k \cY}_{k n} \right) =
\frac{a_0 (\lambda_1 + 1) c}{1+c} \,e_1 +
\cO \!\left( \frac{1}{\lambda_1^k} \right) + \cO \!\left( \lambda_1^k \beta^k \right).
\nonumber
\end{equation}
By the admissibility of the $\cX^k \cY$-cycles,
and the admissibility of the $\cX$-cycle (see Step 4),
we know that the points $x^{\cX}_0$ and $x^{\cX^k \cY}_{k n}$ lie on different sides of $\Sigma$.
Since $h \!\left( x^{\cX}_0 \right)$ is the origin and $a_0 e_1 \in h(\Sigma)$,
we must have $\frac{(\lambda_1 + 1) c}{1+c} \ge 1$.
This implies $c \ge \lambda_2$ (using $\lambda_1 \lambda_2 = 1$ and $c > -1$).

\myStep{7}
Finally we establish the properties of the $\cX^k \cY^{\overline{0}}$-cycles.

We write the map $g_{\cY^{\overline{0}}}$ as
\begin{equation}
g_{\cY^{\overline{0}}}(u) = \Xi u + \chi \;,
\label{eq:gY0bar}
\end{equation}
where $\Xi = Q^{-1} M_{\cY^{\overline{0}}} Q$
and $\chi = Q^{-1} \left( P_{\cY^{\overline{0}}} b - \left( I - M_{\cY^{\overline{0}}} \right) x^{\cX}_0 \right)$.
For each $i,j = 1,\ldots,N$, we let
$\xi_{i j}$ denote the $(i,j)$-element of $\Xi$ and $\chi_i = e_i^{\sf T} \chi$.
Assumption \eqref{it:homoclinicY0bar}, which states that 
$f_{\cY^{\overline{0}}}$ does not map $E^u \!\left( x^{\cX}_0 \right)$ to $E^s \!\left( x^{\cX}_0 \right)$,
implies that $e_1^{\sf T} g_{\cY^{\overline{0}}}(a e_1)$ is nonzero for some $a \in \mathbb{R}$.
But $e_1^{\sf T} g_{\cY^{\overline{0}}}(a_0 e_1) = 0$
because $y_0 = h^{-1}(a_0 e_1)$ maps to $E^s \!\left( x^{\cX}_0 \right)$ under $f_{\cY}$ and $y_0 \in \Sigma$.
By \eqref{eq:gY0bar}, we have $e_1^{\sf T} g_{\cY^{\overline{0}}}(a e_1) = \xi_{11} a + \chi_1$.
Thus we must have $\xi_{11} \ne 0$ and
\begin{equation}
\chi_1 = -a_0 \xi_{11} \;.
\label{eq:chi1}
\end{equation}
By \eqref{eq:Jk} we obtain
\begin{equation}
\det \!\left( I - \Xi J^k \right) = -\xi_{11} \lambda_1^k + \cO(1) \;,
\label{eq:det3}
\end{equation}
which is nonzero for sufficiently large values of $k$.
Therefore $\cX^k \cY^{\overline{0}}$-cycles exist and are unique (although may not be admissible)
for sufficiently large values of $k$, and we denote them $\left\{ x^{\cX^k \cY^{\overline{0}}}_i \right\}$.

The point $h \!\left( x^{\cX^k \cY^{\overline{0}}}_{k n} \right)$ is a fixed point of
$g_{\cY^{\overline{0}} \cX^k}(u) = J^k \left( \Xi u + \chi \right)$.
By using \eqref{eq:Jk} and \eqref{eq:chi1} to solve this fixed point equation, we obtain
\begin{equation}
h \!\left( x^{\cX^k \cY^{\overline{0}}}_{k n} \right) =
a_0 e_1 + \cO \!\left( \frac{1}{\lambda_1^k} \right),
\end{equation}
and hence $x^{\cX^k \cY^{\overline{0}}}_{k n} \to y_0$ as $k \to \infty$.
\hfill$\blacksquare$

\section{Discussion}
\label{sec:conc}
\setcounter{equation}{0}

This paper provides rigorous results for piecewise-linear continuous maps \eqref{eq:f} of any number of dimensions.
From a bifurcation theory perspective, results of such generality are relatively rare because
one cannot usually reduce the dimensionality of \eqref{eq:f} through a centre manifold analysis.
Other rigorous results for \eqref{eq:f} with an arbitrary number of dimensions include
the border-collision normal form \cite{Di03},
the characterisation of fixed points and period-two solutions \cite{Fe78,Si14d},
invariant measures and $N$-dimensional attractors \cite{Gl14,Gl15b},
and shrinking points of mode-locking regions \cite{SiMe09,Si15c}.

Theorems \ref{th:infinity} and \ref{th:homoclinic} provide an equivalence between
subsumed homoclinic connections and infinitely many asymptotically stable periodic solutions in a codimension-three scenario.
We have chosen to state the two results separately as they provide considerably more information
than their combination as a single ``if and only if'' theorem.
With multiple attractors the long-term dynamics of $f$ depends on the initial conditions
and can be a form of unpredictability if the boundaries of the basins of attraction are complicated.
The presence of many attractors indicates a high level of complexity and is important in diverse areas of application \cite{Fe08}.

Given a multi-parameter collection of maps of the form \eqref{eq:f},
a practical approach for finding such codimension-three scenarios
is to numerically solve the three codimension-one conditions \eqref{eq:threeConditions}.
Once this is achieved, one can check that all the conditions in Theorems \ref{th:infinity} and \ref{th:homoclinic} are satisfied.
Section \ref{sec:examples} provided three examples
with parameter values accurate to ten decimal places.
In each case we showed the $\cX^k \cY$-cycles for the smallest eight values of $k$
for which they are admissible and asymptotically stable.

For each of the examples in \S\ref{sec:examples} (and those in \cite{Si14}), we have $d \ne 0$ and ${\rm gcd}(d,n) = 1$ \eqref{eq:dConstraints}.
Yet Theorems \ref{th:infinity} and \ref{th:homoclinic} involve no such restriction on the value of $d$.
It remains to be determined if the codimension-three scenario is feasible when these constraints on $d$ are not satisfied.
In such cases the subsumed homoclinic connection would not connect the points of the $\cX$-cycle with a single closed loop.
It also remains to identify the codimension-three scenario in a system of differential equations.
For instance, it has recently been shown that multiple attractors can be created in
grazing-sliding bifurcations of piecewise-smooth systems of ODEs
(which can be viewed as special types of border-collision bifurcations).
However, currently it is not known how many attractors be created \cite{GlKo12,GlKo16}.


\end{document}